\def\GL{\rm GL}
\def\diag{{\rm diag}}
\newtheorem{theorem}{Theorem}
\newtheorem{lemma}{Lemma}
\begin{document}

\title[almost stable range $1$]{Rings of the right (left) almost stable range $1$}

\author{Victor BOVDI}
\address[\texttt{V.Bovdi}]
{United Arab Emirates University, Al Ain, UAE}
\email{vbovdi@gmail.com}

\author{ \framebox{Bohdan Zabavsky}}
\address[\texttt{B.Zabavsky}]
{Department of Mechanics and Mathematics, Ivan Franko National University\\ Lviv, 79000, Ukraine}
\email{zabavskii@gmail.com}

\keywords{B\'ezout ring; almost stable range; clean ring; elementary divisor ring}

\subjclass{2000 Mathematics Subject Classification: 22E46, 53C35, 57S20}
\thanks{The research was supported by UAEU UPAR grants G00002160 and  G00003658.}

\maketitle

\begin{abstract}
{We introduce a concept of  rings of right (left) almost stable range $1$ and  we construct  a theory of  a canonical diagonal reduction of matrices over such rings. A description of  new classes of noncommutative elementary divisor rings is done as well. In particular, for B\'ezout $D$-domain  we introduced the notions of $D$-adequate element and $D$-adequate ring. We proved that every $D$-adequate  B\'ezout domain has almost stable range $1$. For Hermite $D$-ring we proved the necessary and sufficient conditions to be an elementary divisor ring. A ring $R$ is called an $L$-ring if the condition $RaR = R$ for some  $a\in R$ implies that $a$ is a unit  of $R$. We proved  that every $L$-ring of almost stable range $1$ is a ring of right almost stable range $1$.}
\end{abstract}

% \shorttitle{}

%==============================================

% \usepackage{amsthm}
% \newtheorem{theorem}{Theorem}
% \newtheorem{corollary}{Corollary}
% \newtheorem{lemma}{Lemma}
% \newtheorem{fact}{Fact}
% \newtheorem{conjecture}{Conjecture}
% \newtheorem{proposition}{Proposition}

\bigskip

\centerline{Dedicated to $65^\text{th}$ birthday of Professor Volodymyr Shchedryk}

\section{Introduction}
The stable range  of a ring is one of the most important invariants of the algebraic $K$-theory,
which  was introduced by H.~Bass~\cite{1-bass}. Rings of small stable range  are of particular interest and are being actively studied. Therefore, at present, the theory of rings of stable range  $1$ and $2$ is a rather extensive branch of the theory of rings and modules. However, it should be noted that while studies on stable range $1$ rings are a very popular topic, the same is not true for stable range $2$ rings (see \cite[p.\,2]{3-zabavsky}).

Commutative rings with restrictions on their elements, resembling  the definition of the  stable range,  were considered in \cite{Moore_Steger}. Commutative B\'ezout rings that are of stable range  $1.5$, i.e.~rings in which for any $a\not=0, b,c\in R$ there exists $r$ such that $(a+rb,c )= 1$,  were introduced  in \cite{Shchedryk_2005} and actively studied in \cite{Bovdi_Shchedryk, Shchedryk_2017, 2-schedryk}.

However, it is worth emphasizing that W.~McGovern~\cite{4-mcGovern} is credited with the first study of commutative rings with the properties that non-trivial homomorphic images are a ring of stable range $1$, i.e.~rings of almost stable range $1$. This ground breaking study has given impetus to further research in this area.  The historical overview and current state of the topic are covered in books  \cite{Mon_Shchedryk, 12-zabavsky} and the articles \cite{Bovdi_Zabavsky, Bovdi_Shchedryk,  Gatalevich, Calugareanu,  Moore_Steger, Dopico}.

% \pagebreak

In the present  article, we introduce a notion of non-commutative rings with almost stable range $1$. We show that every ring of stable range  $1$ is a ring of right (left) almost stable range $1$. In addition, the concept of an element of right (left) almost stable range $1$ is introduced. Based on the above, we introduce  a notion of rings of right (left) almost stable range $1$ and study their properties and connections with other ring properties.

Let $R$ be an associative ring (not necessary commutative) with $1\not=0$, let $U(R)$ be the group of units of $R$  and let $R^{n\times m}$ be the vector space  of ${n\times m}$ matrices over $R$ with $n,m\geq 1$. Let $\GL_n(R)$ be the group of units of the matrix ring $R^{n\times n}$.

The matrix $D:=\diag(d_{1},\ldots,d_{s})\in R^{n\times m}$ means a (possibly rectangular) matrix  having $d_{1},\ldots, d_{s}$ (in which $s:=\min(n,m)$) on the main diagonal and zeros elsewhere.  By the main diagonal we mean the one beginning at the upper left corner.

Two matrices $A$ and $B$ over a ring $R$ are called {\it equivalent}  if there exist invertible matrices $P$ and $Q$ over $R$ of  suitable sizes such  that $A=PBQ$ and it  is denoted by $A\sim B$.

According to I.~Kaplansky (see \cite[p.\,465]{7-kaplansky}),    a ring $R$ is called an {\it elementary divisor ring} if for any matrix $A\in R^{n\times m}$ we have
\begin{equation}\label{Eq:1}
A\sim D:=\diag(d_1,\ldots,d_k,0,\ldots,0),
\end{equation}
in which  $d_{i}$ is a {\it total divisor} of $d_{i+1}$, i.e.~$Rd_{i+1}R\subseteq d_iR\cap Rd_i$ for each $i=1,\ldots, k-1$.
In this case, we say that  the matrix $A$ has a  {\it canonical diagonal reduction} over $R$.

The class of elementary divisor rings is contained in the class of B\'ezout rings (for example, see \cite{7-kaplansky, Mon_Shchedryk, 12-zabavsky}),
i.e.~rings with nonzero unit in which every finitely generated one-sided ideal is a principal one-sided ideal.
Note that elementary divisor rings are Hermite rings, i.e.~rings in which each  $1\times 2$ and $2\times 1$ matrix   has a  diagonal reduction,
i.e.~$(a,b)P=(c,0)$ and \linebreak
$Q(a, b)^T=(d,0)^T$, where    $a, b, c, d\in R$ and   $P, Q\in \GL_2(R)$  (see \cite{7-kaplansky, 12-zabavsky}).  Every  right Hermite ring is a  right B\'ezout ring.

\section{Rings of almost stable range $1$}

Set
$R^2:=R\times R=\big\{(a,b) : a,b\in R\big\}$  and $R^3:=\big\{(a,b,c) : a,b,c\in R\big\}$.

A ring  $R$ has  {\it stable range $1$} if for each pair $(a,b)\in R^2$ the equality $aR+bR=R$ implies $(a+b\lambda)R=R$ for some $\lambda\in R$.
Similarly, a ring  $R$ has  {\it stable range $2$} if for each triple \linebreak
$(a,b,c)\in R^3$ the equality $aR+bR+cR=R$ implies that there  exist $\lambda,\mu\in R$ such that
\[
(a+c\lambda)R+(b+c\mu)R=R.
\]

An element  $a\in R\setminus\{0\}$ has  {\it   right (left) almost stable range $1$}, if for each pair $(b,c)\in R^2$  there exists $\lambda\in R$ ($\mu\in R$) such that  equality  $aR+bR+cR=R$  implies   $aR+(b+c\lambda)R=R$ ($Ra+Rb+Rc=R$ implies  $Ra+R(b+\mu c)=R$, respectively). If each nonzero element of $R$ is an element of right (left) almost stable range $1$, then $R$ is a ring of  {\it right (left) almost stable range~$1$}.

This notion generalizes the  notion of rings of almost stable range $1$, which was introduced by W.~McGovern for commutative rings.
 A right (left) B\'ezout ring  is a ring in which every right (left) finitely generated ideal is a principal right (left) ideal.  A B\'ezout ring is a ring that is both right and left B\'ezout.

Let $(a,b)\in R^2$ be such that  $aR+bR=R$. The pair $(a,b)\in R^2$ is called   {\it right diadem}, \linebreak
if there  exists  $\lambda\in R$ such that for each  triple $(a+b\lambda, c,d)\in R^3$ with the property  \linebreak
$(a+b\lambda)R+cR+dR=R$ there exists  $\mu\in R$ such that
$(a+b\lambda)R+(c+d\mu)R=R$.

% \pagebreak

The  concept of a {\it  left diadem}  is  introduced by analogy.
A {\it diadem} is a pair that is both right and  left diadem at the same time. We say that $R$ is a ring of {\it right dyadic range $1$} if for each pair $(a,b)\in R^2$ the equality $aR+bR=R$ implies that the pair $(a,b)$ is a right diadem. Similarly we define  rings of left dyadic range $1$. A ring  {\it of dyadic range $1$} is a ring that is both of right dyadic range $1$ and of  left dyadic range $1$ at the same time (see \cite[Definition~2.1 and Definition~2.3]{3-zabavsky}).
Note that if $R$  has  right almost stable range $1$,  then each pair $(a,b)\in R\setminus\{0\}\times R$ is  a right diadem,   because
$aR+(c+b\mu)R=R$, $\lambda=0$, $\mu\in R$.

%
%
% We start with the following.

\begin{lemma}\label{Lemma:1}
Let $R$ be a right B\'ezout ring.
If\/ $R$  has  stable range $2$,  then for each  pair $(a,b)\in R^2$ there exists  a triple $(d,a_1,b_1)\in R^3$ such that
\[
a=da_1,\qquad  b=db_1 \qquad \text{and}\qquad  a_1R+b_1R=R.
\]
\end{lemma}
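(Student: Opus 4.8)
The plan is to peel off a common left divisor of $a$ and $b$ using the right B\'ezout property, and then to repair the resulting pair of cofactors so that it generates $R$, the repair being powered by the stable range $2$ hypothesis.

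First I would use that $R$ is right B\'ezout to write the right ideal $aR+bR$ as a principal one, say $aR+bR=dR$ for some $d\in R$. Since $a,b\in dR$, there are $a_1,b_1\in R$ with $a=da_1$ and $b=db_1$; and since $d\in aR+bR$, there are $x,y\in R$ with
\[
d=ax+by=da_1x+db_1y=d(a_1x+b_1y).
\]
Setting $t:=1-a_1x-b_1y$, this relation says exactly that $dt=0$. On the other hand $a_1x+b_1y+t\cdot 1=1$, so $a_1R+b_1R+tR=R$.

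Now I would apply the stable range $2$ hypothesis to the triple $(a_1,b_1,t)\in R^3$: since $a_1R+b_1R+tR=R$, there exist $\lambda,\mu\in R$ with $(a_1+t\lambda)R+(b_1+t\mu)R=R$. Because $dt=0$, the perturbations are killed by $d$ on the left, so $d(a_1+t\lambda)=da_1=a$ and $d(b_1+t\mu)=db_1=b$. Hence the triple $(d,\;a_1+t\lambda,\;b_1+t\mu)$ has all the required properties.

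I do not expect a genuine obstacle here; the only real decision is to hand stable range $2$ the triple $(a_1,b_1,t)$ rather than something assembled directly from $a,b,d$. The point to notice is that the B\'ezout relation $d=d(a_1x+b_1y)$ is precisely the assertion that the error term $t$ completes $a_1,b_1$ to a triple generating $R$; if $d$ were a non-zero-divisor one would already have $a_1x+b_1y=1$ and be done, and stable range $2$ is exactly what absorbs the zero-divisor case. The one routine check to keep in mind is that the replacements $a_1\mapsto a_1+t\lambda$, $b_1\mapsto b_1+t\mu$ preserve $a=da_1$ and $b=db_1$, which is immediate from $dt=0$.
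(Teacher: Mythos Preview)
Your proof is correct and is essentially identical to the paper's own argument: both write $aR+bR=dR$, factor $a=da_0$, $b=db_0$, set the error term $c=1-a_0u-b_0v$ with $dc=0$, apply stable range~$2$ to the triple $(a_0,b_0,c)$, and then observe that the perturbations $a_0+c\lambda$, $b_0+c\mu$ are still left-multiplied by $d$ to $a$ and $b$. Only the variable names differ.
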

\begin{proof}
Since $R$ is a right B\'ezout ring, $aR+bR=dR$ for some $d\in R$. Clearly,   $a=da_0$, \linebreak
$b=db_0$ and $au+bv=d$ for some $d,a_0,b_0,u,v\in R$.
Thus   $da_0u+db_0v=d$ and   $dc=0$, where  \linebreak
$c:=1-a_0u-b_0v$.  Consequently $a_0R+b_0R+cR=R$. Since $R$ has  stable range $2$, we have
$(a_0+c\lambda)R+(b_0+c\mu)R=R$
for some $\lambda,\mu\in R$.
Finally, setting   $a_1:=a_0+c\lambda$ and $b_1:=b_0+c\mu$,  we obtain $a_1R+b_1R=R$,  $a=da_1$ and  $b=db_1$.
\end{proof}

\begin{theorem}\label{T:1}
The following conditions hold.
\begin{itemize}
\item[(i)] Each right B\'ezout ring of stable range $1$ is a ring of right almost stable range $1$.
\item[(ii)] Each ring of right (left) almost  stable range $1$ is a ring of right (left)  dyadic  range $1$.
\item[(iii)] Each right (left) B\'ezout ring of a right (left) almost stable range $1$ is a ring of stable  range~$2$.
\end{itemize}
\end{theorem}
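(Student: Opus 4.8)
The plan is to treat the three parts in turn; part~(i) carries the real content, while parts~(ii)--(iii) fall out once the definitions are unwound.

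\emph{Part (i).} Let $R$ be a right B\'ezout ring of stable range $1$, fix a nonzero $a\in R$, and let $(b,c)\in R^2$ with $aR+bR+cR=R$; I must produce $\lambda$ with $aR+(b+c\lambda)R=R$. Since stable range $1$ implies stable range $2$, Lemma~\ref{Lemma:1} applied to the pair $(b,c)$ yields $(d,b_1,c_1)\in R^3$ with $b=db_1$, $c=dc_1$ and $b_1R+c_1R=R$. Writing $1=b_1s+c_1t$ gives $d=d(b_1s+c_1t)=bs+ct$, so $bR+cR=dR$ and hence $aR+dR=aR+bR+cR=R$. Now apply the stable range $1$ condition to the unimodular pair $(b_1,c_1)$ to get $\lambda$ with $(b_1+c_1\lambda)R=R$. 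Then $b+c\lambda=d(b_1+c_1\lambda)\in dR$, while choosing $m$ with $(b_1+c_1\lambda)m=1$ gives $d=d(b_1+c_1\lambda)m=(b+c\lambda)m\in (b+c\lambda)R$; thus $(b+c\lambda)R=dR$, whence $aR+(b+c\lambda)R=aR+dR=R$. (If one prefers not to cite Lemma~\ref{Lemma:1}, the splitting $bR+cR=dR$ with $b_1R+c_1R=R$ can be produced directly by using right B\'ezoutness twice and reducing the resulting three-generator identity with the stable range $1$ condition.)

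\emph{Part (ii).} This is essentially the remark preceding the theorem. Let $R$ be of right almost stable range $1$ and let $(a,b)\in R^2$ with $aR+bR=R$; I verify that $(a,b)$ is a right diadem. If $a\neq 0$, then $a$ is an element of right almost stable range $1$, and taking $\lambda=0$ in the definition of a right diadem reduces the requirement to: for every $(c,d)\in R^2$ with $aR+cR+dR=R$ there is $\mu$ with $aR+(c+d\mu)R=R$, which is exactly what it means for $a$ to be of right almost stable range $1$. If $a=0$, then $bR=R$, so $b\lambda=1$ for some $\lambda$, and $a+b\lambda=1$ makes the diadem condition vacuous. Hence every such $(a,b)$ is a right diadem, i.e.\ $R$ is of right dyadic range $1$; the left statement is symmetric.

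\emph{Part (iii).} Let $R$ be a right B\'ezout ring of right almost stable range $1$ and suppose $aR+bR+cR=R$; I seek $\lambda,\mu$ with $(a+c\lambda)R+(b+c\mu)R=R$. If $b\neq 0$, then $b$ is an element of right almost stable range $1$; applying this to the pair $(a,c)$ and using $bR+aR+cR=R$, there is $\lambda$ with $bR+(a+c\lambda)R=R$, and then $\mu=0$ gives $(a+c\lambda)R+(b+c\mu)R=(a+c\lambda)R+bR=R$. If $b=0$, then $aR+cR=R$, say $ap+cq=1$, and $(a+c\cdot 0)R+(0+cq)R\ni ap+cq=1$, so $\lambda=0$, $\mu=q$ work. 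Hence $R$ has stable range $2$; the left version is symmetric. (Note that right B\'ezoutness is not actually needed in this last step.)

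The crux is part~(i): one must combine the B\'ezout property with stable range $1$ to replace the triple $(a,b,c)$ by a suitable pair, and the two points needing care are securing the splitting $bR+cR=dR$ with a unimodular pair $(b_1,c_1)$ and then checking that $b+c\lambda$ still generates exactly $dR$ after the stable-range-$1$ reduction. Parts~(ii) and~(iii) are purely formal once the definitions of a diadem and of an element of almost stable range $1$ are spelled out.
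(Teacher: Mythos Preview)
Your proof is correct. Parts~(i) and~(ii) match the paper's argument essentially line for line: the paper also invokes Lemma~\ref{Lemma:1} on the pair $(b,c)$, then applies stable range~$1$ to the unimodular pair $(b_1,c_1)$ to produce a unit $b_1\lambda+c_1\in U(R)$ (you phrase this as $(b_1+c_1\lambda)R=R$, which is equivalent for the purpose at hand), and the paper explicitly records part~(ii) as the remark preceding the theorem, treating it as obvious.

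For part~(iii), however, you take a genuinely different and more elementary route. The paper deduces~(iii) by combining~(ii) with the external result \cite[Theorem~2.1]{3-zabavsky} that every B\'ezout ring of right dyadic range~$1$ has stable range~$2$. Your argument bypasses both the diadem notion and the B\'ezout hypothesis entirely: you apply the almost-stable-range-$1$ property of a nonzero coordinate (here $b$) directly to the remaining pair $(a,c)$, and handle $b=0$ by inspection. This is shorter, self-contained, and---as you correctly observe---shows that the right B\'ezout assumption in~(iii) is superfluous, a point not visible from the paper's proof.
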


\begin{proof}
Let $aR+bR+cR=R$ and $bR+cR=dR$. Each ring of stable range $1$ is a ring of stable range $2$ (see \cite[p.\,14]{1-bass}), so  $b=db_1$, $c=dc_1$ and $c_1R+b_1R=R$ for some $d,c_1,b_1\in R$ by Lemma~\ref{Lemma:1}. Since $R$ has  stable range $1$, from $c_1R+b_1R=R$ it follows that  $b_1\lambda+c_1=u\in U(R)$ for some $\lambda\in R$. That yields $db_1\lambda+dc_1=du$, i.e.~$b\lambda+c=du$. Since $aR+bR+cR=R$ and $bR+cR=dR$, we have
\[
aR+duR=aR+bR+cR=aR+(b\lambda+c)R=R,
\]
i.e.~$R$ is a ring of right almost stable range $1$.

Each B\'ezout ring of right dyadic range $1$ is a ring of stable range $2$ (see  \cite[Theorem~2.1]{3-zabavsky}). Two other statements are obvious.
\end{proof}

\begin{theorem}\label{T:2}
A ring $R$ has  right almost stable range $1$ if and only
if\/ $R/J(R)$ has right almost stable range $1$, where $J(R)$ is the Jacobson radical of\/ $R$.
\end{theorem}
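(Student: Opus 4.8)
The plan is to push the defining property back and forth across the canonical epimorphism $\pi\colon R\to\bar R:=R/J(R)$ (write $\bar x:=\pi(x)$), using only two elementary facts about the Jacobson radical. First I would record \emph{Fact (a)}: for any $a_1,\dots,a_n\in R$, one has $a_1R+\cdots+a_nR=R$ iff $\bar a_1\bar R+\cdots+\bar a_n\bar R=\bar R$ — the forward implication is just applying $\pi$, and conversely from $\sum_i\bar a_i\bar x_i=\bar 1$ one gets $\sum_i a_ix_i=1+j$ with $j\in J(R)$, so, as $1+j\in U(R)$, right multiplication by $(1+j)^{-1}$ produces $1\in\sum_i a_iR$; this is just superfluity of $J(R)$ as a right ideal. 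I would also use \emph{Fact (b)}: $u\in U(R)$ iff $\bar u\in U(\bar R)$.

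For the ``only if'' direction, suppose $R$ has right almost stable range $1$ and take $\bar a\in\bar R\setminus\{0\}$. Choose a lift $a\in R$; since $\bar a\ne0$ we have $a\notin J(R)$, hence $a\ne0$, hence $a$ is an element of right almost stable range $1$ in $R$. Given $(\bar b,\bar c)$, lift to $(b,c)$, take a witnessing $\lambda$ for $(b,c)$ relative to $a$, and check that $\bar\lambda$ witnesses $(\bar b,\bar c)$ relative to $\bar a$: if $\bar a\bar R+\bar b\bar R+\bar c\bar R=\bar R$, then $aR+bR+cR=R$ by Fact (a), hence $aR+(b+c\lambda)R=R$, and applying $\pi$ gives $\bar a\bar R+(\bar b+\bar c\bar\lambda)\bar R=\bar R$. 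As $\bar a$ was arbitrary, $\bar R$ has right almost stable range $1$.

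For ``if'', suppose $\bar R$ has right almost stable range $1$ and take $a\in R\setminus\{0\}$. When $a\notin J(R)$ (so $\bar a\ne0$) the argument simply mirrors the above: given $(b,c)$, run the defining property of $\bar a$ on $(\bar b,\bar c)$, lift the resulting $\bar\lambda$ to $\lambda$, and transfer the implication $aR+bR+cR=R\Rightarrow aR+(b+c\lambda)R=R$ back to $R$ using Fact (a) in both directions (together with $\overline{b+c\lambda}=\bar b+\bar c\bar\lambda$). The remaining case $a\in J(R)$ is the step I expect to be the main obstacle: now $\bar a=0$, so there is nothing to lift and one must show on its own that such an $a$ is an element of right almost stable range $1$. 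Here I would use that $aR\subseteq J(R)$ is superfluous, so that $aR+bR+cR=R$ is equivalent to $bR+cR=R$ and $aR+(b+c\lambda)R=R$ is equivalent to $b+c\lambda\in U(R)$; the condition on $a$ thus reduces to the assertion that $R$ has stable range $1$. Securing this last assertion from the hypothesis on $\bar R$ (and Fact (b)) is the delicate point of the proof; the rest is routine bookkeeping with $\pi$.
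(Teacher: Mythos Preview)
Your ``only if'' direction is correct and is exactly what the paper does; the paper then writes ``the rest of the proof is similar'' for the converse. You have correctly isolated the one place where the converse is \emph{not} similar: the case $0\ne a\in J(R)$. Your own reduction there is accurate: since $aR\subseteq J(R)$ is superfluous, the defining condition on such an $a$ becomes ``for every $(b,c)$ with $bR+cR=R$ there exists $\lambda$ with $(b+c\lambda)R=R$'', i.e.\ $R$ has stable range~$1$.

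This gap cannot be closed, because the hypothesis that $\bar R=R/J(R)$ has right almost stable range~$1$ does not force $R$ (equivalently, via your Fact~(b), $\bar R$) to have stable range~$1$. Take $R=\mathbb{Z}[[x]]$. Then $J(R)=xR$ and $\bar R\cong\mathbb{Z}$, which has (right) almost stable range~$1$ since every proper quotient $\mathbb{Z}/n\mathbb{Z}$ is semilocal. Yet the element $a=x\in J(R)\setminus\{0\}$ is \emph{not} of right almost stable range~$1$: with $b=2$, $c=5$ one has $bR+cR=R$, but $2+5\lambda$ is never a unit of $\mathbb{Z}[[x]]$ because its constant term $2+5\lambda_0$ is never $\pm1$. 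Hence $R$ fails to have right almost stable range~$1$ even though $\bar R$ has it. So your instinct that this step is ``delicate'' was exactly right; in fact it is impossible, and the paper's ``similar'' hides the same unresolved case.
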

\begin{proof} Let $\overline{x}:=x+J(R)\in \overline{R}:=R/J(R)$, where $x\in R$. Let $(a,b,c)\in R^3$ be such that
\[
\overline{a\vphantom{b}}\,\overline{R\vphantom{b}}+\overline{b}\,\overline{R\vphantom{b}}
+\overline{c\vphantom{b}}\,\overline{R\vphantom{b}}=\overline{R\vphantom{b}}
\quad\text{and}\quad a\notin J(R).
\]
That  yields   $aR+bR+cR=R$ and $a\ne0$. Since $R$ has  right almost stable range $1$, we get
\[
aR+(b\lambda +c)R=R, \quad \lambda\in R.
\]
Thus  $\overline{a\vphantom{b}}\,\overline{R\vphantom{b}}+(\overline{b}\,\overline{\lambda}
+\overline{c\vphantom{b}})\overline{R\vphantom{b}}=\overline{R\vphantom{b}}$,
so   $\overline{R\vphantom{b}}$ has  right almost stable range $1$. The rest of the proof is similar.
\end{proof}

\begin{theorem}\label{T:3}
Let $R$ be a right B\'ezout ring.  If for each  $(a,b)\in R^2$ with the property $aR+bR=R$ there exists
$\lambda\in R$ such that $a+b\lambda$ is an element of right almost stable range $1$, then $R$ is a ring of stable range $2$.
\end{theorem}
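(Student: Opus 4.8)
The plan is to reduce this to the implication already exploited in Theorem~\ref{T:1}(iii): a (right) B\'ezout ring of right dyadic range $1$ has stable range $2$ (this is \cite[Theorem~2.1]{3-zabavsky}, used in the proof of Theorem~\ref{T:1}). Thus it suffices to show that the hypothesis of Theorem~\ref{T:3} already forces $R$ to be a ring of right dyadic range $1$, and then quote the cited result verbatim.

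For that step I would fix an arbitrary pair $(a,b)\in R^2$ with $aR+bR=R$ and take the element $\lambda\in R$ supplied by the hypothesis, so that $e:=a+b\lambda$ is an element of right almost stable range $1$; in particular $e\ne 0$, since elements of right almost stable range $1$ are nonzero by definition. I then claim that this very $\lambda$ witnesses that $(a,b)$ is a right diadem. Indeed, let $(e,c,d)\in R^3$ be any triple with $eR+cR+dR=R$. Applying the defining property of an element of right almost stable range $1$ (with $e$ in the r\^ole of ``$a$'') to the pair $(c,d)$, we get $\mu\in R$ with $eR+(c+d\mu)R=R$, which is exactly the conclusion demanded in the definition of a right diadem. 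Hence every coprime pair $(a,b)$ is a right diadem, i.e.~$R$ is a ring of right dyadic range $1$, and the proof is completed by \cite[Theorem~2.1]{3-zabavsky} as in Theorem~\ref{T:1}(iii).

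I do not expect a genuine obstacle here; the substance is purely the observation that the definition of ``element of right almost stable range $1$'' is engineered precisely so that a single such element $e=a+b\lambda$ already yields the diadem property of $(a,b)$. The only thing to be careful about is matching the quantifier structure of the two definitions --- ``$\exists\lambda\ \forall(c,d)\ \exists\mu$'' on both sides --- and noting that the non-vanishing of $e$ is automatic. In effect, Theorem~\ref{T:3} weakens the hypothesis of Theorem~\ref{T:1}(iii) from ``\emph{every} nonzero element of $R$ has right almost stable range $1$'' to ``\emph{on each coprime pair}, some combination $a+b\lambda$ has right almost stable range $1$'', and the argument above shows that this weaker hypothesis still delivers right dyadic range $1$, which is all the cited theorem needs.
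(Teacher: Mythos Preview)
Your argument is correct: showing that every coprime pair $(a,b)$ becomes a right diadem via the element $e=a+b\lambda$ supplied by the hypothesis, and then invoking \cite[Theorem~2.1]{3-zabavsky}, does the job. The quantifier matching is exactly as you observe, and the nonzeroness of $e$ is indeed built into the definition.

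However, the paper does \emph{not} argue this way. Its proof of Theorem~\ref{T:3} is a direct, self-contained verification of stable range~$2$: given $aR+bR+cR=R$, it uses the right B\'ezout property to write $bR+cR=dR$, applies the hypothesis to the coprime pair $(a,d)$ to obtain $\nu=a+d\lambda$ of right almost stable range~$1$, expresses $\nu=a+bx+cy$, notes $\nu R+bR+cR=R$, and from $\nu R+(b+c\mu)R=R$ extracts an explicit relation $(a+cu)s+(b+cv)(xs+t)=1$ by an elementary left-ideal argument. So the paper trades the conceptual detour through ``right dyadic range~$1$'' for a concrete computation that avoids quoting the external result \cite[Theorem~2.1]{3-zabavsky}. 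Your route is shorter and makes the logical relationship to Theorem~\ref{T:1}(iii) transparent; the paper's route is more hands-on and keeps the proof independent of that citation. One caveat: the reference you lean on is stated in the paper (in the proof of Theorem~\ref{T:1}) for ``B\'ezout rings'', while Theorem~\ref{T:3} assumes only \emph{right} B\'ezout; you should check that \cite[Theorem~2.1]{3-zabavsky} genuinely covers the one-sided case, whereas the paper's direct argument sidesteps this issue entirely.
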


\begin{proof}
Let $(a,b,c)\in R^3$ such that $aR+bR+cR=R$. Since $R$ is a right B\'ezout ring, we have $bR+cR=dR$ for some  $d\in R$, so $aR+dR=R$. By our assumption there exists $\lambda\in R$ such that the element  $\nu:=a+d\lambda$  has  right almost stable range $1$.   From  $bR+cR=dR$ we obtain  that $a+bx+cy=\nu$ for some $x,y\in R$, so $\nu R+bR+cR=R$, because   $aR+bR+cR=R$.

By the  definition of an element of   right almost stable range $1$ we have $\nu R+(b+c\mu)R=R$ for some $\mu\in R$. Let $\nu s+(b+c\mu)t=1$ for some $s, t\in R$. Since $Rs+Rt=R$, we have
$Rs+R(xs+t)R=R$.

 Let $us+v(xs+t)=ys+\mu t$ for some $u,v\in R$. Thus
$(a+cu)s+(b+cv)(xs+t)=1$,
i.e.~$(a+cu)R+(b+cv)R=R$, so  $R$ is a ring of stable range $2$.
\end{proof}

\section{Case of commutative rings}

Let $R$ be a commutative  ring. Then in \eqref{Eq:1} each   $d_{i}$ is a divisor of $d_{i+1}$  for $i=1,\ldots, k-1$.  According to
W.~McGovern  \cite[p.\,393]{4-mcGovern}, we say that a commutative ring $R$ has  almost stable range $1$, if its   every proper homomorphic image has stable range $1$. We have   the following assertion.

\begin{theorem}\label{T:4}
For  a commutative ring $R$ the following statements are equivalent:
\begin{itemize}
      \item[(i)] $R$ is a ring of almost stable range $1$,
      \item[(ii)] for every $(a,b,c)\in R^3$ with the properties  $a\ne0$ and $aR+bR+cR=R$,  there exists  $\lambda\in R$ such that $aR+(b+c\lambda)R=R$.
    \end{itemize}
\end{theorem}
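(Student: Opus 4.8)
The statement is an equivalence for commutative rings, so I would prove the two implications separately, with the direction (i)$\Rightarrow$(ii) being essentially a direct unwinding of definitions and the direction (ii)$\Rightarrow$(i) being the substantive one. For (i)$\Rightarrow$(ii), suppose $R$ has almost stable range $1$ and take $(a,b,c)\in R^3$ with $a\ne 0$ and $aR+bR+cR=R$. The idea is to pass to the quotient $\overline{R}:=R/aR$. Since $a\ne 0$, the ideal $aR$ is nonzero, so $\overline{R}$ is a proper homomorphic image of $R$ and hence has stable range $1$. In $\overline{R}$ the relation $aR+bR+cR=R$ becomes $\overline{b}\,\overline{R}+\overline{c}\,\overline{R}=\overline{R}$, and stable range $1$ yields $\lambda\in R$ with $(\overline{b}+\overline{c}\,\overline{\lambda})\overline{R}=\overline{R}$, i.e.\ $(b+c\lambda)R+aR=R$ in $R$. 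That is exactly (ii).

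**The main direction.** For (ii)$\Rightarrow$(i), assume (ii) and let $I$ be any nonzero ideal of $R$; I must show $\overline{R}:=R/I$ has stable range $1$. So take $\overline{x},\overline{y}\in\overline{R}$ with $\overline{x}\,\overline{R}+\overline{y}\,\overline{R}=\overline{R}$; lift to $x,y\in R$, so that $xR+yR+I=R$. Pick a nonzero $a\in I$ (possible since $I\ne 0$). Then $aR+xR+yR=R$ with $a\ne 0$, so by (ii) there is $\lambda\in R$ with $aR+(x+y\lambda)R=R$; since $a\in I$ this gives $(x+y\lambda)R+I=R$, which in $\overline{R}$ reads $(\overline{x}+\overline{y}\,\overline{\lambda})\overline{R}=\overline{R}$. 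Hence $\overline{x}+\overline{y}\,\overline{\lambda}$ is a unit in $\overline{R}$ (a commutative ring in which a principal ideal equals the whole ring is generated by a unit), which is precisely the stable range $1$ condition. Therefore $\overline{R}$ has stable range $1$, and since $I$ was an arbitrary nonzero ideal, $R$ has almost stable range $1$.

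**Remarks on the obstacles.** In the commutative setting there is essentially no obstacle: the whole argument rests on the elementary fact that an element generating the unit ideal of a commutative ring is a unit, together with the observation that picking a nonzero element of a nonzero ideal is exactly the bookkeeping device that converts the ``proper homomorphic image'' formulation into the three-element formulation of (ii). The only point to watch is that in (ii)$\Rightarrow$(i) the chosen $a\in I$ need not generate $I$, but this is harmless: we only need $a\ne 0$ to invoke (ii), and $a\in I$ to conclude $(x+y\lambda)R+I=R$ from $(x+y\lambda)R+aR=R$. I would present the proof in this order — first the easy direction via the quotient $R/aR$, then the converse via a nonzero element of an arbitrary nonzero ideal — keeping both halves short since each is a two-line reduction once the right quotient is identified.
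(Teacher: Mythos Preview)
Your argument for (i)$\Rightarrow$(ii) is fine and is exactly the unwinding the paper has in mind (the paper simply cites McGovern for this direction).

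In (ii)$\Rightarrow$(i) there is a genuine gap. From $xR+yR+I=R$ and an \emph{arbitrary} nonzero $a\in I$ you assert $aR+xR+yR=R$; this inference is not valid. Writing $1=xu+yv+i$ with $i\in I$ only gives $iR+xR+yR=R$; a different $a\in I$ need not work. For a concrete failure, take $R=\mathbb{Z}[t]$, $I=(2,t)$, $x=3$, $y=0$: then $xR+yR+I=R$ since $3\equiv 1$ in $R/I\cong\mathbb{F}_2$, but choosing $a=t\in I$ gives $aR+xR+yR=(t,3)\ne R$ (every element of $(t,3)$ has constant term divisible by $3$). Your ``Remarks'' paragraph checks only the later implication $aR+(x+y\lambda)R=R\Rightarrow I+(x+y\lambda)R=R$, and misses that the \emph{earlier} step already needs $aR$ to capture the $I$-contribution.

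The fix is easy and there are two natural routes. One is local to your argument: from $1=xu+yv+i$ set $a:=i$ when $i\ne 0$, and if $i=0$ then already $xR+yR=R$, so any nonzero $a\in I$ trivially gives $aR+xR+yR=R$. The other is what the paper does: first prove that $R/aR$ has stable range $1$ for every nonzero $a$ (here your lifting step is correct, since $\overline b\,\overline R+\overline c\,\overline R=\overline R$ in $R/aR$ genuinely gives $aR+bR+cR=R$), and then observe that for any nonzero ideal $I$ and any $0\ne a\in I$ the ring $R/I$ is a quotient of $R/aR$, whence stable range $1$ descends. Either repair completes the proof.
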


\begin{proof}
The proof of the  part $\textsl{(i)} \Rightarrow \textsl{(ii)}$  can be found in \cite[Theorem~3.6]{4-mcGovern} putting  $a\ne 0$.

Let $(a,b,c)\in R^3$ such that  $aR+bR+cR=R$ and $a\ne0$. Assume  there exists  $\lambda\in R$ such that $aR+(b+c\lambda)R=R$.

Set  $\overline{R\vphantom{b}}:=R/aR$ and $\overline{x}:=x+aR$, where $x\in R$. If $(b,c)\in R^2$ such that
$\overline{b}\,\overline{R\vphantom{b}}+\overline{c\vphantom{b}}\,\overline{R\vphantom{b}}=\overline{R\vphantom{b}}$,
then $aR+bR+cR=R$. According to the assumption
we have $aR+(b+c\lambda)R=R$ for some $\lambda\in R$.
Evidently $(\overline{b}+\overline{c\vphantom{b}}\,\overline{\lambda})\overline{R\vphantom{b}}=\overline{R\vphantom{b}}$,
where $\overline{b}=b+aR$, i.e.~$\overline{R\vphantom{b}}$ is a ring of stable range $1$. If $I$ is a nonzero ideal of $R$, then for every  $0\not=a\in I$ we have the isomorphism $R/I\big /aR/I\simeq  R/aR$. If $R/aR$ is a ring of stable range $1$, then  $R/I$ has  stable range $1$ as well.
\end{proof}

A ring  is called {\it clean} if its every    nonzero element   is  a sum of a unit and an idempotent \cite{Nicholson_77}.
A ring  in  which every  proper homomorphic image is clean   is called {\it neat} (see \cite[p.\,248]{6-mcGovern}).
An element $a\in R$ is called  {\it neat} if $R/aR$ is a clean ring.

An $R$-module $M$  has {\it finite exchange property} if for any $R$-module $N$ with the  decomposition
$N=M'\oplus P=\bigoplus_{i\in I}Q_i$ in which  $M'\cong M$ and $I$ is finite, there exist submodules $Q_i'\subseteq Q_i$ for each $i\in I$, such that $N=M'\oplus(\oplus Q_i')$. A ring $R$ is called {\it exchange} if its regular module $R_R$  is a  finite exchange $R$-module.

A commutative ring $R$ has {\it  neat range $1$} if for each pair  $(a,b)\in R^2$ with the property   \linebreak
$aR+bR=R$ there exists a neat element $c\in R$ such that $a+bt=c$ for some $t\in R$. For the properties of the above rings and their relationship with other types of rings, see \cite{Vas, Lam_2, Lam_3, 6-mcGovern, 10-zabgat, 9-zabavsky}.

\begin{theorem}\label{T:5}
Let $R$ be a commutative B\'ezout ring. If for each  $(a,b,c)\in R^3$  with the properties   $aR+bR+cR=R$ and $a\in R\backslash\{0\}$ there exists a decomposition $a=rs$  for some $r,s\in R$  such that
\begin{equation}\label{Edvvr:1}
rR+bR=R,\qquad  sR+cR=R\qquad  \text{and}\qquad  rR+sR=R,
\end{equation}
then $R/aR$ is a clean ring.
\end{theorem}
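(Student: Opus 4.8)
The goal is to show that $R/aR$ is clean, i.e. that every element of $\overline{R}:=R/aR$ is a sum of a unit and an idempotent. The decomposition $a=rs$ with the three coprimality conditions in \eqref{Edvvr:1} is essentially a Chinese Remainder Theorem (CRT) setup: from $rR+sR=R$ we get $R/aR=R/rsR\cong R/rR\times R/sR$. The plan is to reduce cleanness of $R/aR$ to cleanness of the two factors, and then to exploit the conditions $rR+bR=R$ and $sR+cR=R$ — which are really saying that $b$ is a unit mod $r$ and $c$ is a unit mod $s$ — to verify the relevant lifting. Since $b$ and $c$ range over everything coprime (together with $a$) to the fixed element $a$, the hypothesis will in fact give us enough units mod $r$ and mod $s$ to build idempotents for arbitrary elements.

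**Key steps.** First I would fix an arbitrary $\overline{x}\in R/aR$ and aim to write $\overline{x}=\overline{e}+\overline{u}$ with $\overline{e}^2=\overline{e}$ and $\overline{u}$ a unit of $R/aR$. By CRT it suffices to find an idempotent $\overline{e}$ in $R/aR$ such that $x-e$ is a unit modulo $r$ and $x-e$ is a unit modulo $s$; the natural guess is the idempotent corresponding to $(0,1)\in R/rR\times R/sR$, call it $\overline{e}$, so that $e\equiv 0\pmod r$ and $e\equiv 1\pmod s$. Then I need $x$ to be a unit mod $r$ and $x-1$ to be a unit mod $s$. This is where the hypothesis must be invoked: given the element $\overline{x}$, the elements $x$ (mod $r$) and $x-1$ (mod $s$) need to be arranged to be units, and the way to force this is to run the hypothesis with a carefully chosen triple. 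The second step, then, is to choose $b$ and $c$ appropriately: given $x\in R$, one wants a decomposition $a=r's'$ with $r'R+xR=R$, $s'R+(x-1)R=R$, $r'R+s'R=R$; applying the hypothesis with $b=x$, $c=x-1$ (note $bR+cR\supseteq(b-c)R=R$ already, and $a$ together with $b,c$ generates $R$ since $x-(x-1)=1$) yields exactly such $r',s'$. The third step is to observe that this decomposition $a=r's'$, combined with the original $a=rs$, and the coprimality relations, pins down $\overline{e}$: take $\overline{e}\in R/aR$ to be the idempotent with $e\equiv 0\pmod{r'}$, $e\equiv 1\pmod{s'}$. Then $x$ is a unit mod $r'$ (since $r'R+xR=R$) and $x-1$ is a unit mod $s'$ (since $s'R+(x-1)R=R$), hence $x-e\equiv x\pmod{r'}$ is a unit mod $r'$, and $x-e\equiv x-1\pmod{s'}$ is a unit mod $s'$; by CRT across $a=r's'$, $\overline{x}-\overline{e}$ is a unit of $R/aR$. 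Finally, $\overline{e}$ is an idempotent, so $\overline{x}=\overline{e}+(\overline{x}-\overline{e})$ exhibits $\overline{x}$ as idempotent-plus-unit, proving $R/aR$ clean.

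**Main obstacle.** The subtle point — and the step I would be most careful about — is verifying that the hypothesis actually applies with the chosen $b=x$ and $c=x-1$: one must check $aR+xR+(x-1)R=R$ (immediate, since $x-(x-1)=1$) \emph{and}, crucially, that the decomposition the hypothesis produces is compatible with extracting an idempotent of $R/aR$, which needs $r'R+s'R=R$ — this is part of \eqref{Edvvr:1}, so it is handed to us. A second delicate point is the CRT isomorphism $R/aR\cong R/r'R\times R/s'R$: it requires precisely $r'R+s'R=R$ together with $r's'R=aR$; the latter holds because $a=r's'$. So the only genuinely nontrivial input is the hypothesis itself, and the argument is really a bookkeeping exercise in CRT once the right triple is fed in. I should also double-check the degenerate cases, e.g. if $r'$ or $s'$ is a unit (then one factor is the zero ring and cleanness is trivial there), and the case $x=0$ or $x=1$, which are handled uniformly by the same construction.
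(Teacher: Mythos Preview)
Your argument is correct and is built on the same core idea as the paper's proof: from the decomposition $a=rs$ with $rR+sR=R$ one gets, via CRT, a splitting $R/aR\cong R/rR\times R/sR$ and hence an idempotent $\overline{e}$ in $R/aR$ with $e\equiv 0\pmod r$, $e\equiv 1\pmod s$. The difference is only in packaging. The paper applies the hypothesis to an \emph{arbitrary} pair $b,c$ with $\overline b\,\overline R+\overline c\,\overline R=\overline R$, builds the idempotent $\overline e=\overline s\,\overline v$ explicitly from a relation $ru+sv=1$, checks $\overline e\in\overline b\,\overline R$ and $\overline 1-\overline e\in\overline c\,\overline R$, and then concludes that $\overline R$ is an exchange ring, finishing by citing that a commutative exchange ring is clean. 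You instead specialise immediately to $b=x$, $c=x-1$ (which is legitimate since $x-(x-1)=1$ forces $aR+xR+(x-1)R=R$) and verify cleanness directly by checking that $\overline x-\overline e$ is a unit in each CRT factor. Your route is more self-contained, avoiding the external appeal to ``commutative exchange $\Rightarrow$ clean''; the paper's route yields the slightly stronger intermediate statement that $R/aR$ is exchange. Substantively the two proofs coincide.
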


\begin{proof}
Set $\overline{R}:=R/aR$ and $\overline{x}:=x+aR$, where $x\in R$. Since  $aR+bR+cR=R$ and \eqref{Edvvr:1} holds, we get
\[
\overline{b}\,\overline{R\vphantom{b}}+\overline{c\vphantom{b}}\,\overline{R\vphantom{b}}=\overline{R\vphantom{b}},
\qquad
\overline{r\vphantom{b}}\,\overline{R\vphantom{b}}+\overline{s\vphantom{b}}\,\overline{R\vphantom{b}}=\overline{R\vphantom{b}},
\qquad
\overline{b}\,\overline{R\vphantom{b}}+\overline{r\vphantom{b}}\,\overline{R\vphantom{b}}=\overline{R\vphantom{b}}
\qquad
\text{and}
\qquad
\overline{c\vphantom{b}}\,\overline{R\vphantom{b}}+\overline{s\vphantom{b}}\,\overline{R\vphantom{b}}=\overline{R\vphantom{b}}.
\]
It follows that  $\overline{r}\,\overline{u}+\overline{s}\,\overline{v}=\overline{1}$ and $\overline{r}\,\overline{s}=\overline{0}$ for some $u,v\in R$. This yields  $\overline{r}^2\overline{u}=\overline{r}$ and  $\overline{s}^2\overline{v}=\overline{s}$. Obviously,
\[
(\overline{s}\,\overline{v})^2=\overline{s}\,\overline{v}=\overline{e}\quad\text{and} \quad  (\overline{r}\,\overline{u})^2=\overline{r}\,\overline{u}=\overline{1}-\overline{e}.
\]

\smallskip

\noindent
Taking into account that  $\overline{r\vphantom{b}}\,\overline{R\vphantom{b}}+\overline{b}\,\overline{R\vphantom{b}}=\overline{R\vphantom{b}}$,
we get
\[
\overline{r\vphantom{b}}\,\overline{x\vphantom{b}}+\overline{b}\,\overline{y\vphantom{b}}=\overline{1\vphantom{b}}
\]
for some $\overline{x},\overline{y}\in \overline{R}$.
Thus
$\overline{s\vphantom{b}}\,\overline{v\vphantom{b}}\,\overline{r\vphantom{b}}\,\overline{x\vphantom{b}}
+\overline{s\vphantom{b}}\,\overline{v\vphantom{b}}\,\overline{b}\,\overline{y\vphantom{b}}
=\overline{s\vphantom{b}}\,\overline{v\vphantom{b}}$ and
$\overline{e\vphantom{b}}\,\overline{b}\,\overline{y\vphantom{b}}=\overline{e\vphantom{b}}$,
i.e.~$\overline{e\vphantom{b}}\in \overline{b}\,\overline{R\vphantom{b}}$.
Using the fact that  $\overline{s\vphantom{R}}\,\overline{R}+\overline{c\vphantom{R}}\,\overline{R}=\overline{R}$,
we obtain   $\overline{1}-\overline{e\vphantom{R}}\in \overline{c\vphantom{R}}\,\overline{R}$, so  $\overline{R}$ is an exchange ring  by \cite[Proposition 1.1 and Theorem 2.1]{7-kaplansky}.  Finally,  $\overline{R}$ is a clean ring by \cite[p.\,244]{6-mcGovern}.
\end{proof}

In the case when $R$ is a domain,  the following result  is known.

\begin{theorem}[{\!\!\cite[Theorem 31]{9-zabavsky}}]\label{T:6}
Let $R$ be a commutative B\'ezout domain and let $a\in R\backslash\{0\}$. If $R/aR$ is a clean ring,  then for  each $(b,c)\in R^2$ with the property $aR+bR+cR=R$ there exists  a decomposition $a=rs$  for some $r,s\in R$ such that
\[
rR+bR=R,\qquad   sR+cR=R\quad \text{and}\quad  rR+sR=R.
\]
\end{theorem}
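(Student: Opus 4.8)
The plan is to run the proof of Theorem~\ref{T:5} in reverse, extracting the desired factorization $a=rs$ from the idempotent that the cleanness of $\overline{R}:=R/aR$ provides. Since $R$ is a commutative B\'ezout domain, $aR+bR+cR=R$ means that in $\overline{R}$ we have $\overline{b}\,\overline{R}+\overline{c\vphantom{b}}\,\overline{R}=\overline{R}$. A clean ring is an exchange ring, so the module $\overline{R}$ has the finite exchange property; applying this to the coprime pair $(\overline{b},\overline{c})$ yields an idempotent $\overline{e}=\overline{e}^2$ with $\overline{e}\in\overline{b}\,\overline{R}$ and $\overline{1}-\overline{e}\in\overline{c\vphantom{b}}\,\overline{R}$. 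The key point will be to lift this idempotent to a genuine factorization of $a$ in the domain $R$.

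The main work is the lifting. Write $\overline{e}=\overline{r_0}$ for some $r_0\in R$, so $r_0^2-r_0\in aR$, say $r_0^2-r_0=az$. Set $r:=\gcd(a,r_0)$ (which exists since $R$ is B\'ezout) and write $a=rs$; I claim this $(r,s)$ works. First, $r_0(r_0-1)=az$ is divisible by $a=rs$, and after dividing out the common factor $r$ one checks that $s\mid (r_0-1)\cdot(\text{cofactor})$; combined with $rR+sR$ being forced coprime (because $r$ was taken as the gcd of $a$ and $r_0$, and $s=a/r$ carries the part of $a$ coprime to $r_0$), one gets $rR+sR=R$. In the quotient this recovers $\overline{r}\,\overline{u}+\overline{s}\,\overline{v}=\overline{1}$ and $\overline{r}\,\overline{s}=\overline{0}$, the decomposition $\overline{1}=\overline{e}+(\overline{1}-\overline{e})$ matching $\overline{s}\,\overline{v}$ and $\overline{r}\,\overline{u}$ up to the identification $\overline{r_0}=\overline{e}$. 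Then $\overline{e}\in\overline{b}\,\overline{R}$ translates to: there exist $x,y\in R$ with $r_0 x+by\in aR+r_0(r_0-1)R\subseteq\dots$, and chasing this through gives $rR+bR=R$; symmetrically $\overline{1}-\overline{e}\in\overline{c\vphantom{b}}\,\overline{R}$ gives $sR+cR=R$.

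I expect the technical obstacle to be exactly the passage from the abstract idempotent $\overline{e}\in\overline{R}$ to the clean factorization $a=rs$ with all three coprimality relations simultaneously — the gcd bookkeeping in the domain, showing that the gcd-based choice $r=\gcd(a,r_0)$, $s=a/r$ really does yield $rR+sR=R$ (this uses that $R$ is a domain, so that $r_0\bmod a$ being idempotent forces $\gcd(a,r_0)$ and $\gcd(a,r_0-1)$ to be coprime and to multiply to $a$ up to a unit) and that the two membership conditions $\overline{e}\in\overline{b}\,\overline{R}$, $\overline{1}-\overline{e}\in\overline{c\vphantom{b}}\,\overline{R}$ upgrade from ``$\overline{r}$ (resp. $\overline{s}$) is coprime to $\overline{b}$ (resp. $\overline{c}$) in $\overline{R}$'' to the same coprimality in $R$ itself. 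The last upgrade is routine: $\overline{r}\,\overline{R}+\overline{b}\,\overline{R}=\overline{R}$ means $rR+bR+aR=R$, and since $r\mid a$ this collapses to $rR+bR=R$. Everything else is the arithmetic of gcd's in a B\'ezout domain, which I would organize around the identity $r_0(r_0-1)=az$ and the fact that $\gcd(r_0,r_0-1)=1$. Since the statement is quoted from \cite[Theorem~31]{9-zabavsky}, a full writeup would simply cite it, but the sketch above is the natural self-contained argument.
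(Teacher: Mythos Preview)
The paper does not actually prove Theorem~\ref{T:6}; it merely quotes it from \cite[Theorem~31]{9-zabavsky} and moves on. So there is no ``paper's proof'' to compare against, and your instinct to supply a self-contained argument by reversing the proof of Theorem~\ref{T:5} --- passing from the exchange/clean idempotent back to a factorization of $a$ via gcd's in the B\'ezout domain --- is exactly the right one and is essentially how the cited result is proved.

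There is, however, a genuine labeling slip in your sketch that would make the write-up come out wrong. You set $r:=\gcd(a,r_0)$ where $\overline{r_0}=\overline{e}$ is the idempotent with $\overline{e}\in\overline{b}\,\overline{R}$ and $\overline{1}-\overline{e}\in\overline{c}\,\overline{R}$. With this choice one has $r\mid r_0$ and (as you correctly note) $s\mid r_0-1$. But then, reducing $r_0\in bR+aR$ modulo $r$ gives $0\equiv b\alpha\pmod r$, which says nothing; it is reduction modulo $s$ (using $r_0\equiv 1\pmod s$) that yields $1\equiv b\alpha\pmod s$, i.e.\ $sR+bR=R$. Symmetrically, $1-r_0\in cR+aR$ reduced modulo $r$ gives $rR+cR=R$. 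So your $(r,s)$ satisfies the conclusion with the roles of $b$ and $c$ swapped. The theorem is of course still proved --- just rename, or equivalently set $r:=\gcd(a,\,r_0-1)$ and $s:=\gcd(a,r_0)$ from the start --- but the sentence ``$\overline{e}\in\overline{b}\,\overline{R}$ \dots gives $rR+bR=R$'' is false as written and the matching ``$\overline{e}\leftrightarrow\overline{s}\,\overline{v}$, $\overline{1}-\overline{e}\leftrightarrow\overline{r}\,\overline{u}$'' is backwards for your choice of $r$. Fix the labels and the rest of your outline (the gcd identity $\gcd(a,r_0)\cdot\gcd(a,r_0-1)\sim a$ coming from $r_0(r_0-1)\in aR$ in a domain, and the collapse $rR+bR+aR=R\Rightarrow rR+bR=R$ since $r\mid a$) goes through cleanly.
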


Now, using  Theorems~\ref{T:5} and  \ref{T:6} we have the following.

\begin{theorem}\label{T:7}
Let $R$ be a commutative B\'ezout domain. The following statements hold:
\begin{itemize}
\item[(i)] $R$ has neat range\/ $1$ if and only if for each $(a,b)\in R^2$ with the property  $aR+bR=R$ there exists $\lambda\in R$ such that $R/(a+b\lambda)R$ is clean;

\item[(ii)] $R$ is an elementary divisor ring if and only if\/ $R$ has neat range $1$ (see \cite[Theorem 33]{9-zabavsky});

\item[(iii)] if the stable range of\/ $R/aR$ is not\/ $1$ for each $a\in R\setminus\{0\}$,  then $R$ is not an elementary divisor ring
(see \cite[Theorem 5]{10-zabgat}).
\end{itemize}
\end{theorem}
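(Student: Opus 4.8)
Items (ii) and (iii) are recorded here with references, so the plan is to give a direct argument for (i) and to indicate how (ii) and (iii) enter; the three parts are otherwise independent.

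For (i) I would simply unwind the definitions. Recall that $R$ has neat range $1$ precisely when, for every $(a,b)\in R^2$ with $aR+bR=R$, there is $t\in R$ such that $a+bt$ is a \emph{neat element}, and that an element $c\in R$ is neat exactly when $R/cR$ is clean. Thus the very definition of ``neat range $1$'' reads: for each unimodular pair $(a,b)$ there is $\lambda:=t$ with $R/(a+b\lambda)R$ clean, which is the right-hand condition of (i). Conversely, if for every unimodular $(a,b)$ some $\lambda$ makes $R/(a+b\lambda)R$ clean, then $c:=a+b\lambda$ is a neat element with $a+b\lambda=c$, so the defining property of neat range $1$ holds. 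Nothing beyond the definitions is needed; if a less tautological form is wanted one may, using Theorems~\ref{T:5} and \ref{T:6}, replace ``$R/(a+b\lambda)R$ is clean'' (for $a+b\lambda\ne0$) by the assertion that every triple $(u,v)$ with $(a+b\lambda)R+uR+vR=R$ admits a factorization $a+b\lambda=rs$ with $rR+uR=R$, $sR+vR=R$ and $rR+sR=R$ as in \eqref{Edvvr:1}; this rephrases neat range $1$ purely in terms of coprime factorizations but is not required for (i) itself.

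For (ii) I would invoke \cite[Theorem~33]{9-zabavsky}. Conceptually it fits as follows: a commutative B\'ezout domain is automatically Hermite, so Kaplansky's criterion for being an elementary divisor ring applies; translating ``clean quotient'' into the factorization property \eqref{Edvvr:1} via Theorems~\ref{T:5}--\ref{T:6} and then using (i), that criterion turns into exactly the statement that $R$ has neat range $1$. For (iii) I would invoke \cite[Theorem~5]{10-zabgat}; granting (ii), the point is that neat range $1$ applied to the unimodular pair $(0,1)$ yields $\lambda\in R$ with $R/\lambda R$ clean, and a commutative clean ring has stable range $1$ (argue on the Peirce decomposition of $R/\lambda R$), so $\lambda\ne0$ contradicts the hypothesis, while $\lambda=0$ forces $R$ itself to be clean, hence a local domain, hence every $R/aR$ local of stable range $1$ --- again a contradiction.

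The main obstacle is essentially nonexistent for (i): it is a definition chase, and the only care needed is to phrase the equivalence so that degenerate cases (for instance $a+b\lambda$ equal to $0$ or to a unit) are covered. The genuine mathematical content of Theorem~\ref{T:7} sits in the earlier Theorems~\ref{T:5} and \ref{T:6} and in the cited \cite[Theorem~33]{9-zabavsky} and \cite[Theorem~5]{10-zabgat}, and (i) is the bridge that makes the ``clean quotient'' formulation explicit.
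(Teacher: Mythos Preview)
Your proposal is correct and matches the paper's approach: the paper gives no proof beyond the preamble ``Now, using Theorems~\ref{T:5} and \ref{T:6} we have the following,'' with (ii) and (iii) carried entirely by the citations. You are right that (i) is a pure definition chase (neat range~$1$ $\Leftrightarrow$ for every unimodular $(a,b)$ some $a+b\lambda$ is neat $\Leftrightarrow$ $R/(a+b\lambda)R$ is clean), and that Theorems~\ref{T:5}--\ref{T:6} serve only to translate ``clean quotient'' into the coprime-factorization form underlying (ii), not to prove (i) itself.
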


Each  commutative clean ring $R$ has stable range $1$ (see  \cite[p.\,244]{6-mcGovern} and \cite[Proposition 1.8]{Nicholson_77}).
Furthermore, a neat ring is a ring of almost stable range $1$ and a neat element is an element of almost stable range $1$, so  if $R$ is an elementary divisor domain that is not a ring of stable range~$1$, then at $R\setminus U(R)$ there exists a neat element that has  almost stable range $1$
(see \cite[Theorem~7]{11-zabavsky}).

\section{Rings with Dubrovin property}

The {\it coboundary}  of a one-sided ideal $I$ of a ring $R$ is a  two-sided ideal which equals to the intersection of all two-sided ideals which contain $I$.     Note that this definition is  left-right symmetric.

A ring $R$ in  which for every   $a\in R\setminus\{0\}$ there exists  $b\in R$ such that
\begin{equation}\label{OOOjggT}
RaR=bR=Rb,
\end{equation}
(in other words, the  coboundary of $R$ is a   principal ideal) is called  a ring with {\it Dubrovin property} ($D$-property). Simple elementary divisor rings \cite[Section 4.2]{12-zabavsky},  quasi-duo elementary divisor rings \cite[Theorem 1]{15-Bowtell_Cohn} and  semi-local   semi-primitive elementary divisor rings \cite[Theorem 1]{14-dubrovin},
\cite{dubrovin_plus}  are examples of elementary divisor rings with $D$-property.

In \cite{Bovdi_Zabavsky_2}, the investigation of elementary divisor rings of stable range $1$ with Dubrovin and
Dubrovin-Komarnytsky property was done.
In the same article, a theory of a canonical diagonal reduction of matrices over such rings was  constructed as well,  and new families of non-commutative rings of elementary divisor rings  were  presented.

A two-sided analogue of one-sided rings of almost stable range $1$ is the following.

A  ring $R$ has {\it almost stable range $1$} if for each triple $(a,b,c)\in R^3$ with the properties
\[
RaR+RbR+RcR=R \quad \text{and}\quad c\ne 0,
\]
there exists $\lambda\in R$ such that $R(\lambda a+b)R+RcR=R$.

\begin{theorem}\label{T:8}
Let $R$ be a B\'ezout ring. The following statements are equivalent:
\begin{itemize}
\item[(i)]$R$ is a ring of almost stable range $1$;
\item[(ii)] for each $(a,b,c)\in R^3$ with the properties   $RaR+RbR+RcR=R$ and  $c\ne0$, there exist $\lambda,d\in R$ such that
$(\lambda a + b)R + cR = dR$ and $RdR = R$.
% $(\lambda a+b)R+cR=R$ and $RzR=R$.
\end{itemize}
\end{theorem}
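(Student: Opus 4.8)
The plan is to prove the equivalence by unwinding both conditions in the B\'ezout setting, where every finitely generated one-sided ideal is principal, so that the two-sided ideals $RaR+RbR+RcR$ and $R(\lambda a+b)R+RcR$ can be analyzed through principal right ideals. For the direction $(i)\Rightarrow(ii)$, I would start from a triple $(a,b,c)\in R^3$ with $RaR+RbR+RcR=R$ and $c\ne0$. Condition (i) gives $\lambda\in R$ with $R(\lambda a+b)R+RcR=R$. Since $R$ is right B\'ezout, $(\lambda a+b)R+cR=dR$ for some $d\in R$, and then $RdR\supseteq R(\lambda a+b)R$ as well as $RdR\supseteq RcR$ (because $(\lambda a+b), c\in dR$ forces $d\mid$ both on the right, hence $R(\lambda a+b)R, RcR\subseteq RdR$ after passing to two-sided closures — here one uses that $dR\subseteq RdR$). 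Therefore $RdR\supseteq R(\lambda a+b)R+RcR=R$, i.e. $RdR=R$, which is exactly (ii).

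For the converse $(ii)\Rightarrow(i)$, take $(a,b,c)\in R^3$ with $RaR+RbR+RcR=R$ and $c\ne0$. Condition (ii) produces $\lambda, d\in R$ with $(\lambda a+b)R+cR=dR$ and $RdR=R$. The key point is then to show $R(\lambda a+b)R+RcR=R$. Since $\lambda a+b\in dR$ and $c\in dR$, write $\lambda a+b=d u$ and $c=d v$; conversely $d=(\lambda a+b)s+ct$ for some $s,t\in R$. From these relations, $d\in R(\lambda a+b)R+RcR$, hence $RdR\subseteq R(\lambda a+b)R+RcR$, and since $RdR=R$ we conclude $R(\lambda a+b)R+RcR=R$, establishing (i).

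The step I expect to be the genuine obstacle is the passage, in $(i)\Rightarrow(ii)$, from the equality of \emph{right} ideals $(\lambda a+b)R+cR=dR$ to control over the \emph{two-sided} ideal $RdR$: a priori one only knows $dR\subseteq RdR$, and one must check that $d$ lies in the two-sided ideal generated by $\lambda a+b$ and $c$, which requires extracting the B\'ezout identity $d=(\lambda a+b)s+ct$ (valid precisely because $d$ generates the sum of the two principal right ideals) and then closing up under left multiplication. Dually, in $(ii)\Rightarrow(i)$ the same identity is what lets $d$ be reabsorbed into $R(\lambda a+b)R+RcR$. Once this bridging lemma between one-sided and two-sided generation is in hand, both implications are short; the B\'ezout hypothesis is used exactly once in each direction, to guarantee that the finitely generated right ideal $(\lambda a+b)R+cR$ is principal and to supply the corresponding B\'ezout coefficients.
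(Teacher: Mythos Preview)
Your argument is correct and matches the paper's proof essentially line for line: in each direction one passes between the right-ideal equality $(\lambda a+b)R+cR=dR$ and the two-sided condition $RdR=R$ using the trivial inclusions $xR\subseteq RxR$ together with the B\'ezout expression $d=(\lambda a+b)s+ct$. Your closing commentary slightly swaps which inclusion is needed in which implication---showing $d\in R(\lambda a+b)R+RcR$ is what drives $(ii)\Rightarrow(i)$, not $(i)\Rightarrow(ii)$---but the actual proofs in your first two paragraphs have it right, and this is precisely the paper's route.
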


\begin{proof}
Let $RaR+RbR+RcR=R$ and $(\lambda a +b)R+cR=dR$ in which  $RdR=R$ for some $\lambda\in R$.
Since $cR\subset RcR$ and $(\lambda a +b)R\subset R(a\lambda +mb)R$, we have
\[
dR=(\lambda a+b)R+cR\subseteq R(\lambda a+b)R+RcR.
\]
Hence, $RdR=R$ and $d\in R(\lambda a +b)R+RcR$, so  we have $R(\lambda a +b)R+RcR=R$.

Let the condition $RaR+RbR+RcR=R$ imply  that $R(\lambda a +b)R+RcR=R$ with  $c\ne0$  and  some $\lambda\in R$. It is easy to see that $(\lambda a +b)R+cR=dR$ for some $d\in R$,  because $R$ is a B\'ezout ring. Using the fact that  $(\lambda a +b)R\subseteq dR$, we obtain that $cR\subseteq dR$, so  $RcR\subseteq RdR$ and $R(\lambda a +b)R\subseteq RdR$. Consequently  from  $R(\lambda a +b)R+RcR=R$ we obtain that $RdR=R$, which
proves the equivalence of \textsl{(i)} and \textsl{(ii)}.
\end{proof}

A ring $R$ is called an {\it $L$-ring}      if the condition $RaR=R$ for some  $a\in R$ implies that  $a$ is a unit of $R$. This definition arise from   conditions of \cite[Proposition 7.3(2)]{19-lamdugas}.

\begin{theorem}\label{T:9}
Every  $L$-ring of almost stable range $1$ is a ring of  right almost stable range $1$.
\end{theorem}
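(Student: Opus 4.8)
The plan is to transport the defining condition of a ring of right almost stable range $1$ through the two-sided almost stable range $1$ hypothesis, thereby arriving at a two-sided coprimality, and then to use the $L$-property to push this back down to the one-sided coprimality we actually want.

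Fix $a\in R\setminus\{0\}$ and a pair $(b,c)\in R^2$ with $aR+bR+cR=R$; we must produce $\lambda\in R$ with $aR+(b+c\lambda)R=R$. If $a\in U(R)$ there is nothing to prove, so since $R$ is an $L$-ring we may assume $RaR\neq R$; moreover if $c=0$ we take $\lambda=0$ and if $b=0$ we take $\lambda=1$, so from now on $a,b,c$ are all nonzero. From $aR+bR+cR=R$ we obtain $RaR+RbR+RcR=R$, and since $a\neq0$ the almost stable range $1$ hypothesis, applied with $a$ as the distinguished nonzero element (so that $a$ is kept intact while $b$ and $c$ are recombined), furnishes $\lambda\in R$ with $R(b+c\lambda)R+RaR=R$. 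Set $w:=b+c\lambda$. Since $b=w-c\lambda$ and $c\lambda\in cR$, we have $bR\subseteq wR+cR$, hence $aR+wR+cR=R$. Everything therefore comes down to proving $aR+wR=R$.

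This last step — upgrading the two-sided relation $RwR+RaR=R$, together with the witnessing unimodular triple $aR+wR+cR=R$, to the one-sided relation $aR+wR=R$ — is where the $L$-property is used essentially, via $RdR=R\Rightarrow d\in U(R)$. I would organise it as follows: by Theorem~\ref{T:2} it is enough to establish the conclusion over $\overline R:=R/J(R)$, which is again an $L$-ring of almost stable range $1$ (if $\overline R\,\overline x\,\overline R=\overline R$ then $RxR+J(R)=R$, so $RxR=R$ by Nakayama, so $x\in U(R)$ and $\overline x\in U(\overline R)$); and over the semiprimitive ring $\overline R$ one argues that a two-sided coprimality between two elements admitting a witnessing unimodular triple is automatically one-sided — the $L$-property being precisely what rules out the matrix-type obstruction (in $M_2(k)$, say, $Re_{11}R+RaR=R$ for every $a$ while $e_{11}R+aR\neq R$ in general, and $M_2(k)$ is not an $L$-ring). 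Once $\overline a\,\overline R+\overline w\,\overline R=\overline R$ is in hand, i.e.\ $aR+wR+J(R)=R$, a final appeal to Nakayama yields $aR+wR=R$, with $\lambda$ the multiplier obtained above.

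The bridging step is the main obstacle: $RwR+RaR=R$ is in general strictly weaker than $aR+wR=R$, so the proof cannot be purely formal, and one has to make the two-sided hypothesis and the unimodular triple $aR+wR+cR=R$ genuinely cooperate — it is exactly at this point that the $L$-property (in whose absence the statement fails) is indispensable. The remaining ingredients — the trivial cases, the single use of two-sided almost stable range $1$, and the Nakayama reductions — are routine.
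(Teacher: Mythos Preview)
Your argument has a genuine gap at exactly the point you yourself flag as ``the main obstacle'': you never prove the bridging step. From the two-sided hypothesis you obtain $RwR+RaR=R$ (with $w=b+c\lambda$, or more precisely $w=\mu c+b$ --- note that the definition of almost stable range $1$ produces a left multiplier, so the combination you actually get is $\mu c+b$, not $b+c\lambda$), and you then need $aR+wR=R$. Passing to $R/J(R)$ does not dissolve the difficulty: you still have only a two-sided coprimality and a one-sided unimodular \emph{triple}, and you give no argument that in a semiprimitive $L$-ring these two pieces of data force the one-sided unimodular \emph{pair}. Saying ``one argues that a two-sided coprimality \dots\ is automatically one-sided'' and pointing out that $M_2(k)$ is not an $L$-ring is not a proof; the $L$-property tells you about single elements generating $R$ as a two-sided ideal, and you never isolate such an element.

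The paper's proof takes a different and much shorter route, and it is instructive to see why it succeeds where your attempt stalls. Rather than trying to descend from a sum of two-sided ideals to a sum of right ideals, the paper invokes the equivalent formulation of almost stable range $1$ given in Theorem~\ref{T:8}: one gets $\lambda$ and a \emph{single element} $d$ with $(\lambda a+b)R+cR=dR$ and $RdR=R$. Now the $L$-property applies directly to $d$, forcing $d\in U(R)$ and hence $(\lambda a+b)R+cR=R$. The whole point is that the $L$-condition is a statement about individual elements, so one must first manufacture such an element; this is what Theorem~\ref{T:8} (and, behind it, the B\'ezout condition used there) provides, and what your approach lacks.
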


\begin{proof}
Let $aR+bR+cR=R$ and $c\ne 0$. Obviously $RaR+RbR+RcR=R$ and  $R$ is a ring of  almost stable range $1$ by Theorem ~\ref{T:8}.
Thus, $(\lambda a +b)R+cR=dR$ in which  $RdR=R$. Since $R$ is an  $L$-ring,  $(\lambda a +b)R+cR=R$, i.e.~$R$ is a ring of a right almost stable range $1$.
\end{proof}

Note that every  elementary divisor $L$-ring  is a ring with $D$-property  \cite[Theorem~4.7.1]{12-zabavsky}.
According to Theorem~\ref{T:1} and Theorem~\ref{T:9}  we have the following.

\begin{theorem}\label{T:10}
Every  B\'ezout $L$-ring  of almost stable range $1$ has  stable range $2$.
Moreover, each B\'ezout ring $R$ of stable range $1$ has  almost  stable range $1$.
\end{theorem}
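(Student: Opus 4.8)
The plan is to treat the two assertions separately; the first is immediate from Theorems~\ref{T:9} and~\ref{T:1}, while for the second I would check the definition of (two-sided) almost stable range~$1$ directly, using a left-handed form of Lemma~\ref{Lemma:1}.

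For the first sentence: if $R$ is a B\'ezout $L$-ring of almost stable range~$1$, then by Theorem~\ref{T:9} it is a ring of right almost stable range~$1$, and, being in particular right B\'ezout, it has stable range~$2$ by Theorem~\ref{T:1}(iii). Nothing more is needed.

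For the ``moreover'' part, let $R$ be a B\'ezout ring of stable range~$1$ and let $(a,b,c)\in R^3$ satisfy $RaR+RbR+RcR=R$ with $c\neq0$; I must produce $\lambda\in R$ such that $R(\lambda a+b)R+RcR=R$. Since $R$ has stable range~$1$ it has stable range~$2$, and since stable range is a left--right symmetric notion, the mirror image of Lemma~\ref{Lemma:1} applies and yields a triple $(f,a_1,b_1)$ with $a=a_1f$, $b=b_1f$ and $Ra_1+Rb_1=R$. Since then $a,b\in Rf\subseteq RfR$, we have $RaR+RbR\subseteq RfR$, whence $RfR+RcR=R$. Now apply the left version of stable range~$1$ to $Ra_1+Rb_1=R$: there is $\lambda\in R$ with $R(\lambda a_1+b_1)=R$, i.e.\ $\lambda a_1+b_1$ is left-invertible. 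Consequently
\[
\lambda a+b=\lambda a_1 f+b_1 f=(\lambda a_1+b_1)f,
\qquad\text{hence}\qquad
R(\lambda a+b)R=\bigl(R(\lambda a_1+b_1)\bigr)fR=RfR,
\]
so that $R(\lambda a+b)R+RcR=RfR+RcR=R$, which is exactly what the definition of almost stable range~$1$ demands.

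The one genuine obstacle is this last computation, and within it the point to be careful about is handedness: the definition forces the combination $\lambda a+b$ with the coefficient sitting on the \emph{left} of $a$, and this is precisely why one must invoke the \emph{left} B\'ezout property (so that $a$ and $b$ become left multiples of a single element $f$) and the \emph{left} versions of the stable-range conditions, rather than the right-handed ones used in Lemma~\ref{Lemma:1} and Theorem~\ref{T:1}. The only nontrivial algebraic fact involved is that $Rw=R$ forces $R(wf)R=RfR$, applied with $w=\lambda a_1+b_1$; granting that, together with the left--right symmetry of stable range and the mirror of Lemma~\ref{Lemma:1}, the remainder is routine bookkeeping. (Incidentally, the ``moreover'' argument uses only that $R$ is left B\'ezout.)
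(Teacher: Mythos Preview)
Your proof is correct and follows essentially the same route as the paper: for the first assertion both invoke Theorem~\ref{T:9} followed by Theorem~\ref{T:1}(iii), and for the second both apply the left version of Lemma~\ref{Lemma:1} to factor $a=a_1f$, $b=b_1f$ with $Ra_1+Rb_1=R$, use stable range~$1$ to make the combination $\lambda a_1+b_1$ invertible (the paper obtains an actual unit, you only need left-invertibility, which suffices), and then pass to two-sided ideals. Your write-up is in fact a bit tighter than the paper's: you conclude directly that $R(\lambda a+b)R=RfR$ and hence $R(\lambda a+b)R+RcR=R$, whereas the paper takes an extra detour through a right-ideal decomposition $dR+cR=zR$ before reaching $RzR=R$; you also keep the ordering $\lambda a+b$ matching the definition, while the paper writes $a+\lambda b$ (harmless by symmetry, but your version is cleaner).
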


\begin{proof}
The first part follows from  Theorems~\ref{T:1} and \ref{T:9}.

Let $RaR+RbR+RcR=R$ and $Ra+Rb=Rd$. Since a  ring of stable range $1$ has  stable range $2$ (see \cite[p.\,14]{1-bass}),
we have   $a=a_0d$,  $b=b_0d$ and $Ra_0+Rb_0=R$ by Lemma~\ref{Lemma:1}. The ring  $R$ has  stable range $1$ and $Ra_0+Rb_0=R$, so  $a_0+\lambda b_0=u\in U(R)$. Then $a+\lambda b=ud$, i.e. $R(a+\lambda b)=Rud=Rd$ and $a=a_0d$,  $b=b_0d$. Let $dR+cR=zR$. Similarly we have $dy+c=z$. Then we have $(a+\lambda b)R+cR=zR$. Since $a=a_0d+a_0zd_0$,  we have $b=b_0d=b_0zd_0$ and  $c=c_0z$ for some $d_0,c_0\in R$. It follows that  $RaR\subseteq RzR$,
$RbR\subseteq RzR$, and $RcR\subseteq RzR$. Since $RaR+RbR+RcR=R$, we have   $R=RzR$ and  $R$ has  almost stable range $1$.
\end{proof}

\section{Rings with $D$-adequacy property}

The notion of the adequacy of commutative domains was  introduced by  O.~Helmer \cite{Helmer}.
A.I.~Gatalevych~\cite[Definition 1, p.\,116]{Gatalevich} was the first who extended this notation  to non-\-com\-mu\-ta\-tive  rings.
Now it is known that  the generalized right adequate (in the sense of Gatalevych)  duo B\'ezout domain is an elementary divisor  domain  (see  \cite[Theorem 2, p.\,117]{Gatalevich}).

In \cite{Shchedryk_Gatalevich}, it was explored the problem when  a ring of matrices over either adequate rings or elementary divisor rings inherits the property of adequacy.

Let  $A$ and $E$ be   adequate and elementary divisor domains, respectively. Let  $A^{2 \times 2}$ and $E^{2 \times 2}$  be    rings of ${2 \times 2}$ matrices over rings  $A$ and $E$, respectively. In \cite{Shchedryk_Gatalevich}, it was  proved that the set of full nonsingular matrices from $A^{2 \times 2}$ is an  adequate  set in $A^{2 \times 2}$ and the set of full singular matrices from $E^{2 \times 2}$ is an  adequate set  in the set of full matrices in $E^{2 \times 2}$.

In \cite{Bovdi_Shchedryk_adecvate}, another definition of adequate rings was proposed,
which differs from the one  proposed by A.I.~Gatalevych~\cite[Definition 1, p.\,116]{Gatalevich}.

We  propose a  new version of the definition of adequacy, which is very close to the definitions of adequacy that were given in \cite{Gatalevich, Shchedryk_Gatalevich,Bovdi_Shchedryk_adecvate}.

The introduction of new definitions is related to Dubrovin's condition for non-commutative rings. For a class of duo rings our definition is equivalent to the definition of A.I.~Gatalevich \cite[Definition 1, p.\,116]{Gatalevich}.

Let $R$ be a B\'ezout $D$-domain and  let $a\in R\setminus\{0\}$ be such that
\[
RaR=a^*R=Ra^*\ne R.
\]

The element $a$ is called  {\it $D$-adequate} if for each $b\in R$  the following conditions hold:
\begin{itemize}
\item[(i)] $a=rs$ for some $r,s\in R$ such that  $RrR=r^*R=Rr^*$, $RsR=s^*R=Rs^*$, and  \linebreak $RbR=b^*R=Rb^*$;
\item[(ii)]$r^*R+b^*R=R$;
\item[(iii)]for each non-trivial divisor $s^{\prime *}$ of $s^*$ we have $s'^*R+b^*R\ne R$ in which  $Rs'R=s'^*R=Rs'^*$.
\end{itemize}

A ring $R$ is called {\it $D$-adequate} if each  element $0\not=a\in R$ with the property  $RaR\ne R$  is $D$-adequate.

\begin{theorem}\label{T:11}
Every    $D$-adequate B\'ezout $D$-domain  has  almost stable range $1$.
\end{theorem}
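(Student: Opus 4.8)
The plan is to verify condition (ii) of Theorem~\ref{T:8}, which for a B\'ezout ring is equivalent to being a ring of almost stable range~$1$. So suppose $(a,b,c)\in R^3$ with $RaR+RbR+RcR=R$ and $c\ne 0$. If $RcR=R$ there is nothing to prove (take $\lambda=0$, $d=1$), so assume $RcR=c^*R=Rc^*\ne R$. Since $R$ is $D$-adequate, the element $c$ is $D$-adequate; I would apply condition~(i) of $D$-adequacy to $c$ with the "$b$" there taken to be a suitable element built from $a$ and $b$. First, using that $R$ is a B\'ezout $D$-domain, replace $a$ and $b$ by generators of the coboundaries: set $RaR=a^*R=Ra^*$ and $RbR=b^*R=Rb^*$ (if either is all of $R$ the argument only simplifies). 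From $RaR+RbR+RcR=R$ we get $a^*R+b^*R+c^*R=R$, and since $R$ is right B\'ezout write $a^*R+b^*R=eR$; then $eR+c^*R=R$.

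Next I would run the adequacy of $c^*$ against the element $e$: since $c$ is $D$-adequate, $c=rs$ with $RrR=r^*R=Rr^*$, $RsR=s^*R=Rs^*$, $Re'R=e'^*R=Re'^*$ for the relevant element, $r^*R+e^*R=R$, and every non-trivial divisor $s'^*$ of $s^*$ satisfies $s'^*R+e^*R\ne R$. But $eR+c^*R=R$ forces $e^*R+c^*R=R$, hence $e^*R+s^*R=R$ as well (since $s^*R\supseteq c^*R$ up to associates, $s$ being a factor of $c$); combined with (iii) this is only possible if $s^*$ is a unit, i.e.\ $RsR=R$, so $s$ is a unit by the $D$-domain structure and thus $RcR=RrR=r^*R$ with $r^*R+e^*R=R$. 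Now I would produce the required $\lambda$: from $r^*R+e^*R=R$ and $eR=a^*R+b^*R$ we have $r^*R+a^*R+b^*R=R$; the goal is to find $\lambda$ and $d$ with $(\lambda a+b)R+cR=dR$ and $RdR=R$. The point is that $RcR=r^*R$ is comaximal with the coboundary of $a^*R+b^*R$, so in the factor ring modulo $RcR$ the images of $a$ and $b$ generate the whole (two-sided, hence by B\'ezoutness one-sided after using a stable-range-type move) — I would pass to $R/RcR$, note its coboundary structure lets me apply a stable range~$1$ reduction there to combine $\bar a$ and $\bar b$ into a single unit-generating element $\overline{\lambda a+b}$, then lift.

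The main obstacle I anticipate is the bookkeeping in the middle step: making precise why adequacy of $c$ \emph{against the right auxiliary element} (the generator of $a^*R+b^*R$ or a related coboundary) is the correct thing to invoke, and in particular handling the direction of one-sided ideals carefully in the non-commutative setting — the three conditions of $D$-adequacy are stated with right ideals generated by coboundary generators $x^*$, and I must ensure the comaximality $e^*R+c^*R=R$ really does interact with (ii)--(iii) to kill the factor $s$. A secondary subtlety is the final lifting: $R/RcR$ need not be a domain, but its coboundary of $\overline{RaR}+\overline{RbR}$ is the whole ring, and I expect that this, together with the hypothesis that $c$ is $D$-adequate (which via Theorem~\ref{T:8}(ii) is exactly tailored to produce the $d$ with $RdR=R$), closes the gap. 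I would organize the write-up so that the clean output is the pair $(\lambda,d)$ of Theorem~\ref{T:8}(ii), thereby concluding that $R$ has almost stable range~$1$.
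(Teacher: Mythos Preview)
Your proposal has a genuine gap, and it stems from applying $D$-adequacy to the wrong auxiliary element. You factor $c$ against the generator $e$ of $a^*R+b^*R$; this forces $s^*$ to be a unit, but that does \emph{not} give you $s\in U(R)$ (the $D$-property only says $RsR$ is a principal two-sided ideal, not that $RsR=R$ implies $s$ is invertible --- that is the $L$-ring condition, which is not assumed here). More seriously, even granting $s^*\in U(R)$, all you recover is $r^*R+e^*R=R$, which unwinds to $RcR+RaR+RbR=R$: exactly the hypothesis you started with. Your step~6 then asks for a ``stable range~$1$ reduction in $R/RcR$'' to combine $\bar a,\bar b$ into a single two-sided generator, but nothing in the hypotheses gives $R/RcR$ stable range~$1$, and producing $\lambda$ with $\overline{R}\,\overline{\lambda a+b}\,\overline{R}=\overline{R}$ from $\overline{R}\,\overline a\,\overline{R}+\overline{R}\,\overline b\,\overline{R}=\overline{R}$ is precisely the almost-stable-range-$1$ statement you are trying to prove. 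So the argument is circular.

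The idea you are missing is that the multiplier $\lambda$ is \emph{supplied by the adequacy factorization itself}. In the paper's proof one applies $D$-adequacy of $c$ with respect to $a$ (not to $e$): write $c=rs$ with $r^*R+a^*R=R$ and every nontrivial divisor of $s^*$ non-coprime to $a^*$. Then take $\lambda=r$ and set $(a+rb)R+cR=dR$. The point is that $r^*R+d^*R=R$ (otherwise the common divisor $h^*$ would contain both $r^*$ and $a$, contradicting $r^*R+a^*R=R$), whence $s^*\in d^*R$; if $d^*$ were nontrivial it would be a nontrivial divisor of $s^*$, hence non-coprime to $a^*$, and one then traps $a,b,c$ all inside a proper two-sided ideal, contradicting $RaR+RbR+RcR=R$. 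Thus $RdR=R$, which is condition~(ii) of Theorem~\ref{T:8}. The adequacy factor $r$ does double duty: its coprimality to $a^*$ controls $d^*$, and its appearance as a left multiplier of $b$ is what makes the argument close.
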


\begin{proof}
Let  $(a,b,c)\in R^3$ such that  $RaR+RbR+RcR=R$ and  $c\ne 0$. If $RcR=R$, then the statement of our  theorem is obvious. Let $RcR\ne R$.
It follows that
\begin{itemize}
\item[(i)] $c=rs$;
\item[(ii)] $r^*R+a^*R=R$;
\item[(iii)] for each non-trivial divisor $s'^*$ of $s^*$ we have $s'^*R+b^*R=R$.
\end{itemize}

Let $(a+rb)R+cR=dR$. Using the fact that  $RdR=d^*R=Rd^*$, we obtain \linebreak
$dR\subset RdR=d^*R=Rd^*$. Hence, we obtain $(a+rb)R\subset dR\subset d^*R$. If
\[
r^*R+d^*R=hR\subset h^*R=Rh^*,
\]
then  $aR\subset h^*R\ne R$. Clearly  $h^*R$ is a two-sided ideal, so  $a^*R\subset h^*R$, which contradicts to (ii).
Hence $r^*R+d^*R=R$, so  $r^*u+d^*v=1$ and $r^*us^*+d^*vs^*=s^*$ for some $u,v\in R$. Since $s^*$ is a duo element, then $r^*s^*u'+d^*s^*v'=s^*$ for some $u',v'\in R$. It follows that
\[
sR\subset s^*R\subset d^*R\qquad \text{and}\qquad  d^*R+aR=h^*R\ne R.
\]
It follows that $aR\subset h^*R$, $bR\subset h^*R$, $cR\subset h^*R$ and $h^*R\ne R$. Since $h^*R=Rh^*$, we get  $RaR\subset h^*R$, $RbR\subset h^*R$, and  $RcR\subset h^*R$, which contradicts the condition \[
RaR+RbR+RcR=R.
\]
Thus, we proved that
$(a+rb)R+cR=dR$ and $RdR=R$, i.e.~$R$ has almost stable range $1$.
\end{proof}

\section{ Hermite $D$-rings}

We denote by $A_*$ a two-sided ideal in a ring $R$ generated by all elements of  the matrix $A=(a_{ij})\in R^{m\times n}$.
Evidently $A_*=\sum\limits_{i=1}^m\sum\limits_{j=1}^n Ra_{ij}R$.

We need the   following  well-known results.
\begin{lemma}\label{Lemma:2}
If $A\sim B$, then $A_*=B_*$.
\end{lemma}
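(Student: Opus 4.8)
The plan is to establish the two inclusions $A_*\subseteq B_*$ and $B_*\subseteq A_*$ separately, observing that the second one follows from the first by symmetry. Indeed, if $A=PBQ$ with $P\in\GL_m(R)$ and $Q\in\GL_n(R)$ (where $A,B\in R^{m\times n}$), then $B=P^{-1}AQ^{-1}$ with $P^{-1}\in\GL_m(R)$ and $Q^{-1}\in\GL_n(R)$ again invertible of the same sizes; hence any argument proving $A_*\subseteq B_*$ applies verbatim with the roles of $A$ and $B$ interchanged.

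For the inclusion $A_*\subseteq B_*$ I would write the matrix product entrywise. Setting $A=(a_{ij})=PBQ$ with $P=(p_{ik})$ and $Q=(q_{lj})$, one has
\[
a_{ij}=\sum_{k=1}^{m}\sum_{l=1}^{n} p_{ik}\,b_{kl}\,q_{lj}
\]
for all admissible $i,j$. Now $B_*=\sum_{k,l}Rb_{kl}R$ is by definition a two-sided ideal of $R$ containing every entry $b_{kl}$, so it is closed under left and right multiplication by arbitrary elements of $R$ and under finite sums; consequently each summand $p_{ik}b_{kl}q_{lj}$ lies in $B_*$, and therefore $a_{ij}\in B_*$. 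This gives $Ra_{ij}R\subseteq B_*$ for every pair $(i,j)$, and summing over all $i,j$ yields $A_*=\sum_{i,j}Ra_{ij}R\subseteq B_*$.

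Combining this with the symmetric inclusion $B_*\subseteq A_*$ gives $A_*=B_*$, as required. I expect no genuine obstacle here; the only points needing a little attention are that the factorization $A=PBQ$ involves matrices of compatible (possibly rectangular) sizes, so that the displayed entrywise formula is meaningful, and that one really uses the \emph{two-sided} nature of the ideal $B_*$ in order to absorb simultaneously the factor $p_{ik}$ on the left and the factor $q_{lj}$ on the right.
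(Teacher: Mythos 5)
Your proof is correct and follows essentially the same route as the paper: the paper simply asserts that each $a_{ij}$ lies in $\sum_{k,s}Rb_{ks}R$ and vice versa, while you supply the explicit entrywise expansion $a_{ij}=\sum_{k,l}p_{ik}b_{kl}q_{lj}$ and the symmetry argument via $B=P^{-1}AQ^{-1}$ that justify those assertions. No issues.
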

\begin{proof}
Let $A=(a_{ij}), B=(b_{ij})\in R^{m\times n}$ be such that $A\sim B$. It follows that  $a_{ij}\in \sum_k\sum_sRb_{ks}R$ and $b_{ij}\in \sum_k\sum_sRa_{ks}R$ for each $i$, $j$, $k$, $s$. Consequently
\[
    \sum_i\sum_jRa_{ij}R= \sum_k\sum_sRb_{ks}R,
\]
i.e.~$A_*=B_*$.
\end{proof}

% First of all, we need a well-known result, which can be found in many articles.

% \pagebreak

\begin{lemma}\label{Lemma:3}
Each  Hermite ring  $R$ with   $D$-property  is an elementary divisor ring if and only if every  $A=(a_{ij})\in R^{n\times n}$ with the property   $\sum\limits_{i}\sum\limits_{j} Ra_{ij}R=R$ has a canonical diagonal reduction.
\end{lemma}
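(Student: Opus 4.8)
The forward implication is immediate: if $R$ is an elementary divisor ring then, by definition, every matrix over $R$ has a canonical diagonal reduction, and in particular so does every square matrix whose entries generate $R$ as a two-sided ideal.

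For the converse, assume that every square matrix $A$ with $A_*=R$ has a canonical diagonal reduction. Adjoining zero rows or columns changes neither $A_*$ nor the existence of a canonical reduction of the genuine block, so by the standard passage to square matrices (see \cite{7-kaplansky,12-zabavsky}) it suffices to produce a canonical diagonal reduction for an arbitrary $A=(a_{ij})\in R^{n\times n}$. Put $\delta:=A_*=\sum_{i,j}Ra_{ij}R$. Since $R$ is Hermite it is left and right B\'ezout, so the finitely generated left ideal $\sum_{i,j}Ra_{ij}$ equals $Rd_0$ for some $d_0$, and then $\delta=Rd_0R$ (from $a_{ij}\in Rd_0$ for all $i,j$ together with $d_0\in\sum_{i,j}Ra_{ij}$). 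If $d_0=0$ then $A=0$ and we are done; otherwise the $D$-property applied to $d_0$ gives $\delta=d^*R=Rd^*$ for $d^*:=d_0^*$, and if $\delta=R$ the hypothesis applies directly to $A$. So assume $\delta\neq R$.

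The plan is now to factor the normal element $d^*$ out of $A$ and apply the hypothesis to the cofactor. Since each $a_{ij}\in\delta=d^*R$, write $a_{ij}=d^*b_{ij}$ and set $B:=(b_{ij})\in R^{n\times n}$, so that $A=\diag(d^*,\ldots,d^*)\,B$. Using $Rd^*=d^*R$ one computes $\delta=\sum_{i,j}Rd^*b_{ij}R=d^*\big(\sum_{i,j}Rb_{ij}R\big)=d^*B_*$, and comparing this with $\delta=d^*R$ gives $B_*=R$. By hypothesis, therefore, $B=P\,\diag(e_1,\ldots,e_n)\,Q$ with $P,Q\in\GL_n(R)$ and $Re_{i+1}R\subseteq e_iR\cap Re_i$ for each $i$.

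It remains to commute $d^*$ past $P$ and verify the divisibility chain. Since $d^*R=Rd^*$, the rule $\sigma(r)\,d^*=d^*r$ defines a ring automorphism $\sigma$ of $R$, and applying $\sigma$ to the entries of $P$ produces $P_1:=\sigma(P)\in\GL_n(R)$ with $\diag(d^*,\ldots,d^*)\,P=P_1\,\diag(d^*,\ldots,d^*)$; hence
\[
A=\diag(d^*,\ldots,d^*)\,P\,\diag(e_1,\ldots,e_n)\,Q=P_1\,\diag(d^*e_1,\ldots,d^*e_n)\,Q,
\]
so $A\sim\diag(d^*e_1,\ldots,d^*e_n)$. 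The chain passes to this diagonal: $R(d^*e_{i+1})R=d^*\big(Re_{i+1}R\big)\subseteq d^*\big(e_iR\cap Re_i\big)\subseteq(d^*e_i)R\cap R(d^*e_i)$, the last inclusion using $d^*Re_i=Rd^*e_i$. Thus $A$ has a canonical diagonal reduction and $R$ is an elementary divisor ring. The two steps I expect to cost the most care are the cancellation $B_*=R$ and the fact that $\sigma$ is a well-defined automorphism (hence $P_1\in\GL_n(R)$): both are one-line consequences of $d^*$ being a non-zero-divisor, which is automatic in a B\'ezout $D$-domain, whereas in the presence of zero-divisors one works instead with the right annihilator of $d^*$ --- a two-sided ideal comaximal with $B_*$ --- and adjusts the $b_{ij}$ by its elements; this is precisely where the normality built into the $D$-property is used.
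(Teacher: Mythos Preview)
Your argument follows the same line as the paper's: factor out the normal element $d^{*}$ (the paper writes $\alpha$) to obtain $A=\diag(d^{*},\dots,d^{*})\,B$ with $B_{*}=R$, and then invoke the hypothesis on $B$. The paper's proof in fact stops at the assertion $B_{*}=R$ and leaves the remaining diagonalisation implicit, so your explicit treatment of commuting $d^{*}$ past $P$ via $\sigma$ and of the total-divisor chain for the $d^{*}e_i$ is an addition, not a deviation.

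The one substantive difference is the cancellation step. To deduce $B_{*}=R$ from $d^{*}B_{*}=d^{*}R$ you cancel $d^{*}$, which requires $d^{*}$ to be a non-zero-divisor; you correctly flag this and sketch the repair via the right annihilator. The paper instead appeals to Lemma~\ref{Lemma:1}: a Hermite ring has stable range~$2$, and the device in that lemma (choosing the cofactors after adjusting by elements annihilated by the left factor) lets one pick the $b_{ij}$ so that they already generate $R$ as a two-sided ideal, without any cancellation. This is exactly the annihilator adjustment you outline, but packaged as a consequence of stable range~$2$, which makes clear that no domain hypothesis is needed. Your observation that $\sigma$ being a genuine automorphism also rests on $d^{*}$ being regular is well taken; the paper does not address that point at all, simply ending once $B_*=R$ is established.
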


\begin{proof} Since the proof of the ``if'' part  is obvious, we start with the proof of the ``only if'' part.

Let $Ra_{ij}R= a_{ij}^*R= Ra_{ij}^*R$ for  each $i$, $j$ (see \eqref{OOOjggT}). For some duo-element $\alpha\in R$ we have
\[
\sum\limits_{i}\sum\limits_{j} Ra_{ij}R=\sum\limits_{i}\sum\limits_{j} a_{ij}^*R=\sum\limits_{i}\sum\limits_{j} Ra_{ij}^*=\alpha R=R\alpha.
\]

That yields  $a_{ij}=\alpha a_{ij}^0$ and $ A=\mathrm{diag}(\alpha,\dots,\alpha)A_0$, where $A_0= (a_{ij}^0)$.
Since $R$ is Hermite, it  is a B\'ezout ring of stable range $2$ \cite[p.\,30, Collorarly $2$.1.3, Theorem $1$.2.40]{12-zabavsky}, so  $\sum_i\sum_jRa_{ij}'R=R$ by Lemma~\ref{Lemma:1}.
\end{proof}

\begin{theorem}\label{T:12}
Let $R$ be a Hermite ring of almost stable range $1$.  If $a,b,c\in R$ with the property  $RaR+RbR+RcR=R$, then there exist $P, Q\in \GL_{2}(R)$ such that
\[
P\left[
\begin{array}{cc}
a & 0 \\
b & c\\
\end{array}\right]Q=\left[
\begin{array}{cc}
z & 0 \\
* & *\\
\end{array}
\right]\qquad \text{and}\qquad   RzR=R.
\]
\end{theorem}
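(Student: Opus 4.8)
The plan is to use the $D$-property to pull a common left factor out of the matrix, then invoke the almost stable range $1$ hypothesis to clean up the $(1,1)$-entry, and finally use the Hermite property to achieve the triangular form. More precisely, since $R$ has the $D$-property, write $RaR=a^*R=Ra^*$, $RbR=b^*R=Rb^*$, $RcR=c^*R=Rc^*$; the hypothesis $RaR+RbR+RcR=R$ says these duo elements generate the unit ideal. The first step is to apply the definition of almost stable range $1$ to the triple $(a,b,c)$ (with $c\neq 0$, which we may assume, since if $c=0$ then $RaR+RbR=R$ and the problem reduces to a Hermite-ring diagonalization of a $1\times 2$ block): there exists $\lambda\in R$ with $R(\lambda a+b)R+RcR=R$. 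Set $a_1:=\lambda a+b$; then the column operation $\begin{bmatrix}1&0\\ \lambda&1\end{bmatrix}$ on the left turns $\left[\begin{smallmatrix}a&0\\ b&c\end{smallmatrix}\right]$ into $\left[\begin{smallmatrix}a&0\\ a_1&c\end{smallmatrix}\right]$ with $Ra_1R+RcR=R$.

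Next I would exploit that $R$ is right B\'ezout (Hermite implies right B\'ezout) together with the $D$-property to analyze the bottom row $(a_1,c)$: since $R$ is Hermite, there is $Q_1\in\GL_2(R)$ with $(a_1,c)Q_1=(d,0)$ for some $d\in R$, and then $dR=a_1R+cR$. Because $R(a_1)R+RcR=R$ and $a_1R+cR\subseteq Ra_1R+RcR$... wait—more carefully: $dR=a_1R+cR$, hence $RdR\supseteq R a_1 R+RcR=R$, so $RdR=R$, i.e. $d$ is an element whose two-sided ideal is everything. Applying $Q_1$ on the right to the full $2\times2$ matrix sends $\left[\begin{smallmatrix}a&0\\ a_1&c\end{smallmatrix}\right]$ to a matrix of the form $\left[\begin{smallmatrix}*&*\\ d&0\end{smallmatrix}\right]$ with $RdR=R$; swapping the two rows (an element of $\GL_2(R)$) yields $\left[\begin{smallmatrix}d&0\\ *&*\end{smallmatrix}\right]$, and we take $z:=d$. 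Combining the left factors $\begin{bmatrix}0&1\\1&0\end{bmatrix}\begin{bmatrix}1&0\\ \lambda&1\end{bmatrix}$ into $P$ and using $Q:=Q_1$ gives the asserted form with $RzR=R$.

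The step I expect to be the genuine obstacle is verifying that the bottom-right entry truly vanishes while simultaneously keeping $RzR=R$ — i.e., making sure the Hermite reduction of the row $(a_1,c)$ and the subsequent row interchange do not reintroduce a nonzero entry in the $(1,2)$ position of the final matrix. The resolution is that the row interchange only permutes rows, so the zero produced in the top-right corner by $Q_1$ (from $(a_1,c)Q_1=(d,0)$, applied to the \emph{second} row) stays put, and it is the \emph{original} first row that becomes the new second row, whose entries are harmless wildcards $*$. One should also double-check the degenerate case $c=0$ separately: then $RaR+RbR=R$, and a single Hermite reduction of $\left[\begin{smallmatrix}a\\ b\end{smallmatrix}\right]$ (on the left) gives $\left[\begin{smallmatrix}z\\ 0\end{smallmatrix}\right]$ with $zR=aR+bR$, whence $RzR\supseteq RaR+RbR=R$; padding with the zero second column and using Lemma~\ref{Lemma:2} (equivalent matrices have the same generated ideal) closes this case as well.
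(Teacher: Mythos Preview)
Your argument is correct and follows the same overall strategy as the paper: use almost stable range~$1$ to find $\lambda$ so that the pair $(\lambda a+b,c)$ generates a right ideal $dR$ with $RdR=R$, then use Hermite operations to place $d$ in the $(1,1)$ slot with a zero to its right. Your execution is in fact slightly more streamlined than the paper's: after the same left operation $\left[\begin{smallmatrix}\lambda&1\\1&0\end{smallmatrix}\right]$ (which is your $\left[\begin{smallmatrix}0&1\\1&0\end{smallmatrix}\right]\left[\begin{smallmatrix}1&0\\\lambda&1\end{smallmatrix}\right]$), the paper invokes Theorem~\ref{T:8}, then Lemma~\ref{Lemma:1} (hence stable range~$2$) and completability of unimodular columns to build a $Q$ that puts $d$ at position $(1,1)$, and then needs a \emph{second} right Hermite step to clear the $(1,2)$ entry. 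You instead apply the Hermite row reduction directly to $(\lambda a+b,\,c)$, obtaining $(d,0)$ in one shot, observe $a_1,c\in dR\Rightarrow RdR\supseteq Ra_1R+RcR=R$, and swap rows; this bypasses Lemma~\ref{Lemma:1} and the completion argument entirely. Two minor quibbles: your opening sentence invokes the $D$-property, but Theorem~\ref{T:12} does not assume it and you never use $a^*,b^*,c^*$, so that sentence should simply be dropped; and in the $c=0$ case the left Hermite reduction of the column $(a,b)^T$ yields $Rz=Ra+Rb$ rather than $zR=aR+bR$, though the conclusion $RzR=R$ follows just the same.
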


\begin{proof}
Since every  Hermite ring is B\'ezout  \cite[Corolary 2.13]{12-zabavsky},
then $R$  has almost stable range $1$.  For each $(a,b,c)\in R^3$ with the property  $RaR+RbR+RcR=R$,  there exists $\lambda \in R$ such  that $(\lambda a+b)R+cR=dR$ by Theorem ~\ref{T:8}. This yields that
\[
\left[\begin{array}{cc}
\lambda&1\\
1&0\\
\end{array}
\right]
\left[\begin{array}{cc}
a&0\\
b&c\\
\end{array}\right]=\left[\begin{array}{cc}\lambda a+b& c\\a&0\end{array}\right].
\]
Set $d:=(\lambda a+b)x+cy$,  $a_0d:=\lambda a+b$ and  $c:=c_0d$ for some $a_0,c_0,x,y\in R$. Each Hermite ring is a B\'ezout ring of stable range $2$ \cite[Theorem~1.2.40]{12-zabavsky}, so  according to Lemma~\ref{Lemma:1} we have that $\lambda a+b=a_1d$ and  $c=c_1d$, where $a_1R+c_1R=R$,
i.e.~$a_1u+c_1v=1$ for some $u,v\in R$. Since $Ru+Rv=R$ and $R$ is a Hermite ring, then the  column
$\left[\begin{array}{c}
u \\
v\\
\end{array}\right]$
is completable   to a  matrix $
Q=\left[\begin{array}{cc}

        u & * \\
        v & *
\end{array}\right]\in \GL_2(R)$ (see  \cite[Corolary 2.1.7]{12-zabavsky}). Evidently
\[
\left[\begin{array}{cc}\lambda& 1\\1&0\end{array}\right]\left[\begin{array}{cc}a&0\\b&c\end{array}\right]Q=\left[\begin{array}{cc}d& *\\ * & *\end{array}\right],
\]
where $RdR=R$. Using the fact that  $R$ is a Hermite ring,  we obtain that

\[
\left[\begin{array}{cc} d& *\\ * & *\end{array}\right]S=
\left[\begin{array}{cc}z& 0\\ * & *\end{array}\right]
\]
for an invertible  matrix $S$. Taking into account that   $dR\subset zR$ and $RdR=R$, we get that  $R=RdR\subseteq RzR$, i.e.~$RzR=R$.
\end{proof}

For a   canonical diagonal reduction of  matrices over a Hermite ring $R$ (see \cite[Theorem~5.1]{7-kaplansky})  it is enough to consider  such  reduction for matrices of the form $\left[\begin{array}{cc}a& 0\\ b & c\end{array}\right]$, where $a,b,c\in R$.
Therefore,  according to  Lemma~\ref{Lemma:3}, for   a   canonical diagonal reduction of matrices over
a Hermite ring  $R$ with $D$-property  it is enough to consider such  reduction for matrices
$\left[\begin{array}{cc}a& 0\\ b & c\end{array}\right]$,
where
\[
RaR+RbR+RcR=R.
\]
Using  Theorem~\ref{T:12} we have  the following assertion.

\begin{theorem}\label{T:13}
Let $R$ be a Hermite ring with $D$-property   of almost stable range $1$. The ring $R$  is an elementary divisor ring if and only if for every $a, b,c\in R$ with  $RaR=R$  the matrix $\left[\begin{array}{cc} a& 0\\ b & c\end{array}\right]\in R^{2\times 2}$  has  a canonical diagonal reduction over $R$.
\end{theorem}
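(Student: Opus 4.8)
The ``only if'' direction is immediate: if $R$ is an elementary divisor ring, then every matrix over $R$ admits a canonical diagonal reduction, in particular each $2\times 2$ matrix $\left[\begin{array}{cc} a& 0\\ b & c\end{array}\right]$ with $RaR=R$. So the plan is to prove the converse, assuming the stated reduction property for all $a,b,c\in R$ with $RaR=R$.

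First I would invoke the remark preceding the theorem together with Lemma~\ref{Lemma:3}: in order to conclude that the Hermite ring $R$ with $D$-property is an elementary divisor ring, it suffices to show that every matrix $M:=\left[\begin{array}{cc} a& 0\\ b & c\end{array}\right]$ with $RaR+RbR+RcR=R$ has a canonical diagonal reduction over $R$. So I would fix such a triple $a,b,c$ and apply Theorem~\ref{T:12} to it: this produces $P,Q\in\GL_2(R)$ such that $N:=PMQ$ has the form $\left[\begin{array}{cc} z& 0\\ * & *\end{array}\right]$ with $RzR=R$; in particular $M\sim N$.

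The key observation is then that $N$ is itself of the form $\left[\begin{array}{cc} z& 0\\ b' & c'\end{array}\right]$ with $RzR=R$, so the hypothesis of the theorem applies \emph{directly} to $N$ and gives $N\sim\diag(d_1,d_2)$ with $Rd_2R\subseteq d_1R\cap Rd_1$, i.e.~with $d_1$ a total divisor of $d_2$. Since $\sim$ is transitive, $M\sim\diag(d_1,d_2)$, which is a canonical diagonal reduction of $M$. As $a,b,c$ were arbitrary subject to $RaR+RbR+RcR=R$, Lemma~\ref{Lemma:3} and the remark preceding the theorem then yield that $R$ is an elementary divisor ring.

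The argument is essentially bookkeeping: the substantive work sits in Theorem~\ref{T:12} (which normalizes the corner entry of $M$ to an element generating $R$ as a two-sided ideal) and in Lemma~\ref{Lemma:3} (which, in the $D$-property setting, reduces the elementary-divisor property to matrices whose entries generate $R$ as a two-sided ideal). The one point that deserves care is checking that the matrix $N$ produced by Theorem~\ref{T:12} has precisely the shape to which the hypothesis applies; this holds because its $(1,2)$-entry is $0$, its $(1,1)$-entry $z$ satisfies $RzR=R$, and its lower-row entries are arbitrary elements of $R$, so the hypothesis applies verbatim. I do not expect any genuine obstacle beyond this verification.
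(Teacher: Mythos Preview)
Your proposal is correct and follows precisely the route the paper indicates: the paper's own ``proof'' consists of the remark that, by Kaplansky's reduction and Lemma~\ref{Lemma:3}, it suffices to diagonally reduce matrices $\left[\begin{smallmatrix} a & 0\\ b & c\end{smallmatrix}\right]$ with $RaR+RbR+RcR=R$, and then the single sentence ``Using Theorem~\ref{T:12} we have the following assertion.'' You have simply written out that sentence in full, including the verification that the output of Theorem~\ref{T:12} has the shape required by the hypothesis.
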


\phantom{----}

\vskip-1.3em

This result shows that the set of  $a\in R$ for which $RaR=R$ is crucial  for a  canonical \linebreak diagonal reduction of matrices over $R$.

{\bf Example.} {\it 
Let $R$ be a domain. An element $a\in R\setminus\{0,U(R)\}$ is called {\it  atom}  if $a$  has no proper factors. An element which is either a unit or a product of atoms is
called  {\it finite}.  Let  $R$ be a B\'ezout domain with $D$-condition of almost stable range $1$. If  for every  $a\in R$, the  condition $RaR=R$ implies that $a$ is a finite element, then $R$ is an elementary divisor ring. Indeed, according to Theorem~\ref{T:13}, we can consider  a matrix  $A$ of the form $\left[\begin{array}{cc} a& 0\\ b &
 c \end{array}\right]\in R^{2\times 2}$, where $RaR=R$ and  $a$ is a finite element.
There exist  $P, Q\in \GL_2(R)$ (see \cite[Theorem 6]{18-zabkom}) such that $PAQ=\left[\begin{array}{cc} z& 0\\ 0 & d\end{array}\right]$ in which $RdR\subseteq zR\cap Rz$.
}

% {\bf Example.}  Let $R$ be a domain. An element $a\in R\setminus\{0,U(R)\}$ is called {\it  atom}  if $a$  has no proper factors. An element which is either a unit or a product of atoms is
% called  {\it finite}.  Let  $R$ be a B\'ezout domain with $D$-condition of almost stable range $1$. If  for every  $a\in R$, the  condition $RaR=R$ implies that $a$ is a finite element, then $R$ is an elementary divisor ring. Indeed, according to Theorem~\ref{T:13}, we can consider  a matrix  $A$ of the form $\left[\begin{array}{cc} a& 0\\ b &
%  c \end{array}\right]\in R^{2\times 2}$ where $RaR=R$ and  $a$ is a finite element.
% There exist  $P, Q\in \GL_2(R)$ (see \cite[Theorem 6]{18-zabkom}) such that $PAQ=\left[\begin{array}{cc} z& 0\\ 0 & d\end{array}\right]$ in which $RdR\subseteq zR\cap Rz$.

\section*{Acknowledgements}
The current  article finishes the joint collaboration,
which had started prior to Professor Bohdan  Zabavsky's demise in August 2020.

\newpage

\end{document}